\definecolor{webred}{rgb}{0.75,0,0}
\definecolor{webgreen}{rgb}{0,0.75,0}
\newtheorem{thm}{Theorem}[section]
\newtheorem{lem}[thm]{Lemma}
\newtheorem{prop}[thm]{Proposition}
\newtheorem{hyp}[thm]{Hypothesis}
\newtheorem{notn}[thm]{Notation}
\theoremstyle{definition}
\theoremstyle{remark}
\newtheorem{rem}[thm]{Remark}
\numberwithin{equation}{section}
\newcommand{\Div}{\operatorname{\mathrm{div}}}
\newcommand{\rot}{\operatorname{\mathrm{curl}}}
\newcommand{\db}[1]{_{\raise-0.3ex\hbox{$\scriptstyle #1$}}}
\newcommand{\dd}[1]{_{\raise-1.5pt\hbox{$\scriptstyle #1$}}}
\newcommand{\di}{\displaystyle}
\newcommand{\dr}{{\rm d}}
\newcommand  {\N}{{\mathbb N}}
\newcommand  {\R}{{\mathbb R}}
\newcommand {\Id}{\mathbb {I}}
\renewcommand  {\H}{{\mathrm H}}
\newcommand  {\PH}{{\mathrm {PH}}}
\renewcommand  {\L}{{\mathrm L}}
\newcommand  {\eps}{\varepsilon}
\newcommand  {\nn}{\mathsf n}
\newcommand  {\xx}{\boldsymbol{\mathsf x}}
\newcommand  {\bH}{\mathbf{H}}
\newcommand  {\bL}{\mathbf{L}}
\newcommand {\bd}{\mathbf{d}}
\renewcommand {\sl}{\mathsf{l}}
\newcommand {\su}{\mathsf{u}}
\newcommand {\sq}{\mathsf{q}}
\newcommand {\sr}{\mathsf{r}}
\renewcommand {\sf}{\mathsf{f}}
\newcommand {\sg}{\mathsf{g}}
\newcommand {\sz}{\mathsf{z}}
\newcommand {\sh}{\mathsf{h}}
\newcommand  {\sw}{\mathsf{w}}
\newcommand {\ds}{\longrightarrow}
\def\dsp{\displaystyle}
\newcommand {\dzz} {  \partial_{\sz \sz}}
\newcommand {\dz} {  \partial_{\sz }}
\def \dd#1#2{\frac{\partial #1}{\partial #2}}
\newcommand {\sv}{v}
\renewcommand {\sp}{p}
\newcommand{\vepsp}{{v}^+_{\eps}}
\newcommand{\vepsm}{{v}^-_{\eps}}
\newcommand{\Vplus}{\mathrm{v}^+}
\newcommand{\Vmb}{{\mathrm{v}}^-}
\newcommand{\Vmt}{\widetilde{\mathrm{v}}^-}
\newcommand{\pmob}{{\mathrm{p}}^-}
\newcommand{\Pplus}{{\mathrm{p}}^+}
\newcommand  {\fp}{\widetilde{\mathrm{p}}^-}
\newcommand{\bk}{\mathrm{k}}
\newcommand{\sk}{\mathrm{k}}
\newcommand{\bsigma} {\boldsymbol{\sigma}}
\newcommand  {\fv}{\Vmt}
\definecolor{mpurple}{rgb}{0.6,0,0.8}
\definecolor{myblue}{rgb}{0.,0.2,0.8}
\definecolor{mygreen}{rgb}{0,0.75,0.0}
\definecolor{mred}{rgb}{0.9,0,0}
\definecolor{mbrun}{rgb}{0.8,0.5,0}
\begin{document}


\title[Asymptotic Study for Stokes-Brinkman  model]{Asymptotic Study for Stokes-Brinkman  model with jump embedded transmission conditions}

\author{Philippe Angot\textsuperscript{1}, Gilles Carbou\textsuperscript{2}, Victor P\'eron\textsuperscript{2,3} }

\thanks{\noindent \textsuperscript{1} Laboratoire d'Analyse, Topologie et Probabilit\'es (LATP), UMR CNRS 7353 Equipe d'Analyse Appliqu\'ee
Centre de Math\'ematiques et Informatique (CMI),
Aix-Marseille Universit\'e, Technopole Chateau-Gombert,
39, rue F. Joliot Curie,
13453 Marseille Cedex 13
FRANCE}

\thanks{\noindent \textsuperscript{2} Laboratoire de Math\'ematiques et de leurs Applications de Pau, UMR CNRS 5142, 
B\^atiment IPRA, Universit\'e de Pau et des Pays de l'Adour, 
Avenue de l'Universit\'e - BP 1155,
64013 PAU CEDEX}

\thanks{\noindent \textsuperscript{3} Magique 3d, INRIA Bordeaux Sud-Ouest, 
 Universit\'e de Pau et des Pays de l'Adour, 
Avenue de l'Universit\'e - BP 1155, 
64013 PAU CEDEX}

\begin{abstract}
In this paper, one considers the coupling of a Brinkman model and Stokes equations with jump embedded transmission conditions. In this model, one assumes that the viscosity  in the porous region is very small. Then we derive a Wentzel--Kramers--Brillouin (WKB) expansion in power series of the square root of this small parameter for the velocity and the pressure which are solution of the transmission problem. This WKB expansion is justified rigorously by proving uniform errors estimates.  
 
\end{abstract}
\date{\today, Version 6}

\maketitle

\tableofcontents

\section{Introduction}

We address the problem of fluid flow modeling in complex media which combine porous regions and fluid regions with free flow. This issue holds for instance in the study of aquifer media made up of a porous media containing craks and conduits (see \cite{CGW10}) and also the passive control of the flow around an obstacle covered by a porous thin layer (see \cite{BM01}).

In this paper the free flow satisfies the linear Stokes equation. In the porous media, we consider two models, the Brinkman  and the Darcy models. There are several interface conditions in the literature. In the case of the Stokes-Brinkman  coupling, the simpler interface condition is the continuity of the velocity and the normal stress. More accurate models are given by Ochoa-Tapia \& Whitaker transmission conditions, by Beavers \& Joseph conditions or Beaver, Joseph \& Saffman conditions (see \cite{CGW10}). In this paper, we deal with the more general jump embedded transmission conditions described in \cite{An11}. This condition links the jumps and the averages of both the velocity and the shear stress on the interface. A small parameter appears in this system, in particular the equivalent viscosity in the porous part is small, so that when this parameter tends to zero, we expect that the flow in the porous part will be described by the Darcy law. Our interest lies in the asymptotic study justifying the obtention of the limit model. In particular we describe the boundary layer due to the jump conditions appearing in the porous medium.

\vspace{2mm}

Let us describe the Stokes-Brinkman model with jump embedded transmission conditions.   The problem is set in the domain $\Omega\subset \R^3$ made of a fluid  region $\Omega_{+}$ and a porous subdomain $\Omega_{-}$. 
\begin{figure}[h]
\begin{center}
\figinit{0.8pt}
\figpt 1:(-100,20)
\figpt 2:(-30,70)\figpt 3:(50,50)
\figpt 4:(80,0)\figpt 5:(30,-40)
\figpt 6:(-30,0)\figpt 7:(-80,-20)
\figpt 8:(-50,20)\figpt 9:(-20,50)\figpt 10:(50,20)
\figpt 12:(10,10)\figpt 13:(0,0)
\figpt 14:(-20,5)%
\figpt 16:(-70,20)\figpt 15:(-10,30)
\figpt 17:(40,10) \figpt 18:(-60,-20)
\figpt 21:(50,65)
\figpt 19:(61,65) \figpt 20:(17,-7)
\psbeginfig{}
\pscurve[1,2,3,4,5,6,7,1,2,3]
\pssetfillmode{yes}\pssetgray{0.8}
\pscurve[8,9,10,12,13,14,8,9,10]
\pssetfillmode{no}\pssetgray{0}
\psarrow[12,20]
\psendfig

\figvisu{\figBoxA}{{Figure 1}\ --\ The domain $\Omega$ and the subdomains $\Omega_{-}$, $\Omega_{+}$}{
\figwritew 15: $\Omega_{-}$(6pt)
\figwritec [16]{$\Omega_{+}$}
\figwritec [17]{$\Sigma$}
\figwritec [18]{$\Gamma$}
\figwritee 20: $\nn$(1pt)
  }
\centerline{\box\figBoxA}
 \label{F1}
\end{center}
\end{figure}
We assume that the domains $\Omega_{-}$ and $\Omega_{+}$ are Lipschitz and bounded, and that $\Omega_- \subset \overline{\Omega_-} \subset \Omega$. We denote $\Sigma=\partial \Omega_-$ so that se have $\Omega=\Omega_-\cup \Sigma\cup \Omega_+$ (see Figure \ref{F1}). We denote by $\nn$ the outward unit normal at $\partial \Omega_-$. 

On the fluid region $\Omega_{+}$, the velocity $\sv_{\eps}^+$ and the pressure $\sp_{\eps}^+$ satisfy the Stokes equations. In the porous region $\Omega_{-}$,  the velocity $\sv_{\eps}^-$ and the pressure $\sp_{\eps}^-$ satisfy a Brinkman model. We couple these models by the Beaver-Joseph conditions at the common boundary $\Sigma$. These conditions link the jumps of the velocity and the normal stress vector with the averages of these quantities on $\Sigma$.

We denote by  $ \bsigma^+(\sv_{\eps}^+,\sp_{\eps}^+ )$ (resp. $\bsigma^-(\sv_{\eps}^-,\sp_{\eps}^- )$) the stress tensor in the fluid (resp. in the porous) medium :
$$
\dsp \bsigma^+(\sv_{\eps}^+,\sp_{\eps}^+ )= 2\mu \bd(\sv_{\eps}^+)- \sp_{\eps}^+ \Id , \quad
\dsp  \bsigma^-(\sv_{\eps}^-,\sp_{\eps}^- )=2\eps \bd(\sv_{\eps}^-)- \sp_{\eps}^- \Id,$$
with
\begin{itemize}
\item  $ \dsp 
\bd(v) =\frac 1 2 \left(\nabla v + \nabla^\perp v\right)$, that is $\dsp  (\bd(v))_{ij}=\frac 1 2 \left( \dd{v^j}{x_i} + \dd{v^i}{x_j}\right)$,
\item  $\mu$ is the viscosity of the fluid and $\eps$ is the effective viscosity in the porous medium.
\end{itemize}

\vspace{2mm}
The problem writes 
\begin{equation}
\label{0SBBJ}
 \left\{
   \begin{array}{lll}
-\nabla \cdot \bsigma^-(\sv_{\eps}^-,\sp_{\eps}^- ) + \kappa\sv_{\eps}^{-}=\sg^-
 \quad&\mbox{in}\quad \Omega_{-}
\\[0.5ex]
-\nabla \cdot  \bsigma^+(\sv_{\eps}^+,\sp_{\eps}^+ )=\sg^+
 \quad&\mbox{in}\quad \Omega_{+}
\\[0.5ex]
\nabla\cdot \sv_{\eps}^-=0 \quad&\mbox{in}\quad\Omega_{-}
\\[0.5ex]
\nabla\cdot \sv_{\eps}^+=0 \quad&\mbox{in}\quad \Omega_{+}
\\[0.5ex]
{\bsigma}^+(\sv_{\eps}^+,\sp_{\eps}^+ )\cdot \nn = \bsigma^-(\sv_{\eps}^-,\sp_{\eps}^- )\cdot \nn +\boldsymbol{M}\{\sv_{\eps}\} +\sl  &\mbox{on} \quad \Sigma
\\[0.5ex]
\{ \bsigma(\sv_\eps,\sp_\eps)\cdot \nn  \}=\boldsymbol{S}(\sv_{\eps}^+ -\vepsm) + \sh &\mbox{on} \quad \Sigma
\\[0.2ex]
\sv_{\eps}=0
  \quad &\mbox{on}\quad \Gamma \
   \end{array}
    \right.
\end{equation}
where:

\begin{itemize}
\item  $\kappa>0$ is fixed positive constant.
\item In the previous equations set on $\Sigma$, we denote by $\{\sw\}$ the mean value across $\Sigma$ of $\sw$.
\item The right-hand sides in \eqref{0SBBJ} are given data which are  defined as follow : $\sg^\pm: \Omega_{\pm}\rightarrow \R^3$ are  vector fields in $\left(\L^2(\Omega_{\pm})\right)^3$, $\sh\in\left(\H^{-1/2}(\Sigma)\right)^3$,  and $\sl\in\left(\H^{-1/2}(\Sigma)\right)^3 $  are vector fields defined on $\Sigma$.
\item The matrix $\boldsymbol{M}$ is zero on $\nn^\perp$ and satisfies: $$\boldsymbol{M}\xi = \beta (\xi \cdot \nn)\nn$$with $\beta >0$. 
\item The matrix $\boldsymbol{S}$ satisfies $\boldsymbol{S}_{\vert  \nn^\perp}=\alpha \Id_{\vert\nn^\perp}$ and $\dsp \boldsymbol{S}(\nn)=\frac{1}{\eps}\nn$:
$$\boldsymbol{S}(\xi ) = \frac{1}{\eps}(\nn\cdot \xi)\nn + \alpha (\xi - (\nn\cdot \xi)\nn),$$
with $\alpha>0$.
\end{itemize}

We remark that using the divergence free conditions, we can replace the first equation in \eqref{0SBBJ} by
\begin{equation}
\label{putain}
- \eps \Delta \sv_{\eps}^{-} + \nabla \sp_{\eps}^-+\kappa\sv_{\eps}^{-}=\sg^-
 \quad\mbox{in}\quad \Omega_{-}
\end{equation}

\begin{notn}

For any set $\mathcal{O}\subset\R^3$ we denote $\bL^2(\mathcal{O})$ the space $\left(\L^2(\mathcal{O})\right)^3$ and 
$\bH^s(\mathcal{O})$ the space $\left(\H^{s}(\mathcal{O})\right)^3$. 

\end{notn}

\section{Statement of the Main Results}

In order to prove existence of weak solutions for \eqref{0SBBJ}, let us describe the associated  variational formulation.

We denote by $\PH^1 (\Omega)$  the space of vector fields $\sv\in\bL^2(\Omega)$ such that $\sv^+:= \sv_{\vert\Omega_{+}}\in\bH^1(\Omega_{+})$ and $\sv^-:= \sv_{\vert\Omega_{-}}\in\bH^1(\Omega_{-})$. We introduce a weak formulation of the problem \eqref{0SBBJ} for  $\sv=(\sv^+,\sv^-)$ in the space
$$
V=\{ \su \in \PH^1 (\Omega) \ | \, \Div \su^\pm =0  \quad  \mbox{in}\quad \Omega_{\pm}  , \quad \su^+ =0 \quad  \mbox{on}  \quad\Gamma
\} \ ,
$$ 
endowed with the piecewise $\H^1$ norm. Such a  variational formulation writes : Find $\sv=(\sv^-,\sv^+)  \in V$ such that
\begin{equation}
   \forall \su \in  V, \quad 
   a_{\eps}(\sv,\su) = b(\su) ,
\label{VP}
\end{equation}
where
\begin{multline*}
  a_{\eps} (\sv,\su) :=
 2 \eps \int_{\Omega_{-}} \bd( \sv^-) : \bd( \su^-)\, \dr\xx 
  + \int_{\Omega_{-}}\kappa\sv^- \cdot \su^-\, \dr\xx 
  +2 \mu  \int_{\Omega_{+}} \bd (\sv^+) : \bd( \su^+) \,\dr\xx
  \\
   +
   < \beta \{\sv \cdot \nn\}\, , \{\su \cdot \nn \}>_{-1/2, \Sigma}
   +    <\alpha [\sv_{\tau} ] \, , 
     [\su_{\tau} ]  >_{-1/2, \Sigma}
+\eps^{-1}
<[\sv  \cdot \nn ]   \, ,  
 [\su  \cdot \nn ]  >_{-1/2, \Sigma} 
  \ ,
\end{multline*}
with $< \, ,   >_{-1/2, \Sigma} $ being a duality pairing between $\H^{-1/2}(\Sigma)^3$ and    $\H^{1/2}(\Sigma)^3$, $ [\sv ] = \sv^+ -\sv^-$ denoting the jump of $\sv$ across $\Sigma$, and
$$
 b(\su)
     =   \int_{\Omega}  \sg \cdot  \su  \,\dr\xx 
     + 
   < \sh \, ,       [\su ]  >_{-1/2, \Sigma}
+
   < \sl \, ,\{\su\}>_{-1/2, \Sigma}
 \   ,
$$  
compare with \cite[Th 1.1]{An10}, \cite[Th 2.1]{An11}.  The duality pairing $< \, ,   >_{-1/2, \Sigma} $  between $\H^{-1/2}(\Sigma)^3$ and    $\H^{1/2}(\Sigma)^3$ coincides with the duality pairing in $\L^2(\Sigma)^3$. 
 
\vspace{2mm}
For $\eps>0$, applying Lax Milgram Theorem,  we prove the existence and uniqueness of weak solution for \eqref{0SBBJ} and we obtain uniform estimates of the solutions with respect to the parameter $\eps$. 

\begin{thm}
\label{thm32}
Let $\sg^\pm\in\bL^2(\Omega_{\pm})^3$ and $\sh,\sl \in\L^2(\Sigma)^3$. Then, the problem  \eqref{VP} has a unique solution $\sv=\sv_{\eps}\in V$ for all $\eps>0$. Moreover, for $\eps>0$ small enough, the following uniform estimate holds : 
\begin{multline}
\label{E5}
 \eps \|\bd( \sv^-) \|_{0,\Omega_{-}}^2
  +  \frac {\kappa}{4} \| \sv^- \|^2_{0,\Omega_{-}}  
  + \frac{ \mu C^2}{3} \|\sv^+ \|_{1,\Omega_{+}}^2
+  \frac{1}{4\eps} \| [\sv  \cdot \nn ] \|^2_{0,\Sigma}
+  \frac{\alpha}{4} \| [\sv ] \|^2_{0,\Sigma}
+\frac{\beta}{4}
\| \{\sv \cdot \nn\}   \|^2_{0,\Sigma}
\\
\leqslant
c
\left(  \|\sg\|^2_{0,\Omega}
+  \|\sh \|^2_{0,\Sigma} 
+ \|\sl\|^2_{0,\Sigma} 
\right)
  \ ,
\end{multline} 
with a constant $c=c(\mu, \kappa ,\alpha, \beta)$.
\end{thm}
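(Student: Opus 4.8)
The proof follows the standard variational route. The plan is to apply the Lax--Milgram theorem to obtain existence and uniqueness for each fixed $\eps>0$, and then to derive the uniform bound \eqref{E5} by testing the identity \eqref{VP} against the solution itself and routing every interface term through quantities that remain controlled uniformly in $\eps$.

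First I would verify the hypotheses of Lax--Milgram on the Hilbert space $V$ equipped with the piecewise $\H^1$ norm. Continuity of $b$ follows from Cauchy--Schwarz and the trace inequality $\|\su^\pm\|_{0,\Sigma}\leq C\|\su^\pm\|_{1,\Omega_\pm}$ (here the hypothesis $\sh,\sl\in\L^2(\Sigma)^3$ lets me use plain $\L^2(\Sigma)$ pairings), and continuity of $a_\eps$ for each fixed $\eps$ is obtained likewise, every volume and interface term being dominated by the piecewise $\H^1$ norm. For coercivity I would take $\su=\sv$: since $\kappa,\mu,\alpha,\beta,\eps>0$, the six summands of $a_\eps(\sv,\sv)$ are all nonnegative. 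On $\Omega_+$ the Dirichlet condition $\sv^+=0$ on $\Gamma$ allows the Korn--Poincar\'e inequality $\|\bd(\sv^+)\|_{0,\Omega_+}^2\geq C^2\|\sv^+\|_{1,\Omega_+}^2$, while on $\Omega_-$ the second Korn inequality together with the $\kappa$-term gives $2\eps\|\bd(\sv^-)\|_{0,\Omega_-}^2+\kappa\|\sv^-\|_{0,\Omega_-}^2\geq c_\eps\|\sv^-\|_{1,\Omega_-}^2$ with $c_\eps>0$. Hence $a_\eps$ is coercive, with an $\eps$-dependent constant (which suffices for existence), and Lax--Milgram yields the unique $\sv_\eps\in V$ for every $\eps>0$.

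For the uniform estimate I would put $\su=\sv$ in \eqref{VP}, so the left-hand side becomes the sum of the six nonnegative terms of $a_\eps(\sv,\sv)$, and bound the right-hand side $b(\sv)=\int_\Omega\sg\cdot\sv+\langle\sh,[\sv]\rangle_{-1/2,\Sigma}+\langle\sl,\{\sv\}\rangle_{-1/2,\Sigma}$ term by term, absorbing the $\sv$-dependent factors back into the coercivity terms by Young's inequality. The volume integral is treated by Cauchy--Schwarz, absorbing $\|\sv^-\|_{0,\Omega_-}^2$ into the $\kappa$-term and $\|\sv^+\|_{1,\Omega_+}^2$ into the Korn term. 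For the interface terms I would split each field on $\Sigma$ into normal and tangential parts, $[\sv]=[\sv\cdot\nn]\nn+[\sv_\tau]$ and $\{\sv\}=\{\sv\cdot\nn\}\nn+\{\sv_\tau\}$. The contributions of $\sh\cdot\nn$, of $\sh_\tau$, and of $\sl\cdot\nn$ are absorbed respectively into $\eps^{-1}\|[\sv\cdot\nn]\|_{0,\Sigma}^2$, $\alpha\|[\sv_\tau]\|_{0,\Sigma}^2$ and $\beta\|\{\sv\cdot\nn\}\|_{0,\Sigma}^2$; in the normal-jump case the Young weight is taken proportional to $\eps$ so that the residual datum term is bounded by $\|\sh\cdot\nn\|_{0,\Sigma}^2$ once $\eps\leq1$. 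I would also note that for $\eps$ below a threshold one has $\eps^{-1}\geq\alpha$, so that the surviving fractions of $\alpha\|[\sv_\tau]\|^2$ and $\eps^{-1}\|[\sv\cdot\nn]\|^2$ combine to produce the full-jump term $\tfrac{\alpha}{4}\|[\sv]\|_{0,\Sigma}^2$ appearing in \eqref{E5}.

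The single term not directly dominated by coercivity is the tangential-mean contribution $\langle\sl_\tau,\{\sv_\tau\}\rangle$, and this is where the main care is needed: $\{\sv_\tau\}$ does not occur in $a_\eps$, and the trace of $\sv^-$ on $\Sigma$ is \emph{not} controlled uniformly in $\eps$, since only the $\eps$-weighted symmetric gradient of $\sv^-$ is bounded. The remedy is to write $\{\sv_\tau\}=\sv^+_\tau-\tfrac12[\sv_\tau]$, trading the uncontrolled porous-side trace for the fluid-side trace plus the tangential jump; then $\langle\sl_\tau,\sv^+_\tau\rangle$ is bounded via the trace theorem on $\Omega_+$ by $C\|\sl\|_{0,\Sigma}\|\sv^+\|_{1,\Omega_+}$ and absorbed into the Korn term, while $\langle\sl_\tau,[\sv_\tau]\rangle$ goes into the $\alpha$-term. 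Collecting all absorbed pieces with sufficiently small Young weights, legitimate once $\eps$ is below a threshold, leaves exactly $c(\|\sg\|_{0,\Omega}^2+\|\sh\|_{0,\Sigma}^2+\|\sl\|_{0,\Sigma}^2)$ on the right, with $c$ depending only on $\mu,\kappa,\alpha,\beta$ through the Korn and trace constants. The main obstacle throughout is precisely this uniform handling of the interface terms: everything rests on expressing porous-side traces via fluid-side traces and jumps, the former bounded uniformly by Korn on $\Omega_+$ and the latter by the jump penalties built into $a_\eps$.
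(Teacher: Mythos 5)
Your proposal is correct and follows essentially the same route as the paper: Lax--Milgram with the Korn--Poincar\'e inequality on $\Omega_+$ for existence, then the energy identity $a_\eps(\sv,\sv)=b(\sv)$ with Young-type absorption of each datum term into the corresponding coercivity term, using $\alpha\leqslant\tfrac{1}{2\eps}$ for $\eps$ small to recover the full jump term $\tfrac{\alpha}{4}\|[\sv]\|^2_{0,\Sigma}$. In particular you identified exactly the paper's key step, namely rewriting $\{\sv_\tau\}=\sv^+_\tau-\tfrac12[\sv_\tau]$ so that the otherwise uncontrolled porous-side trace is replaced by the fluid-side trace (bounded via the trace theorem and the Korn term on $\Omega_+$) plus the tangential jump penalty.
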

\begin{rem}
This result still holds when the data $\sh$ and $\sl$ belong to the space $\H^{-\frac12}(\Sigma)^3$. For the sake of simplicity, we prove this lemma when $\sh$ and $\sl$ belong to the space $\L^2(\Sigma)^3$. 
 We eventually compare this stability result with \cite{An10,An11} where the author prove also energy estimates.  In \cite[Th 1.1]{An10} and \cite[Th 2.1]{An11}, estimates are non-necessary uniform, whereas estimates \eqref{E5}  are uniform with respect to the parameter $\eps$. 
\end{rem}

The asymptotic limit of the Stokes-Brinkman model towards the Stokes-Darcy one with Beavers-Joseph interface conditions is studied in \cite{An11,CGW10} in the case of a flat interface when the viscosity $\tilde \mu $ in the porous region is very small $\tilde \mu =\eps\ll 1$ and when the jump of the normal velocities is penalized. We address in this paper  the problem of the convergence of the model \eqref{0SBBJ} when the parameter $\eps $ tends to zero. 

When $\eps$ tends to zero, we formally converge to the following Stokes Darcy problem with Beavers \& Joseph interface conditions
\begin{equation}
\label{SD-BJC}
 \left\{
   \begin{array}{lll}
\nabla \sp_{0}^-+\kappa\sv_{0}^{-}=\sg^-
 \quad&\mbox{in}\quad \Omega_{-}
\\[0.5ex]
 -\nabla \cdot \bsigma(\sv_0^+,\sp_0^+)=\sg^+ \quad&\mbox{in}\quad \Omega_{+}
\\[0.5ex]
\nabla\cdot \sv_{0}^-=0 \quad&\mbox{in}\quad\Omega_{-}
\\[0.5ex]
\nabla\cdot \sv_{0}^+=0 \quad&\mbox{in}\quad \Omega_{+}
\\[0.5ex]  
\dsp \bsigma(\sv_0^+,\sp_0^+)\cdot \nn =   -\sp_{0}^- \nn+\frac{\beta}{2}\left(( \sv_{0}^++\sv_{0}^-)\cdot \nn\right)\nn   +\sl
 \quad &\mbox{on} \quad \Sigma
   \\[0.5ex] 
  (\sv_{0}^+ -\sv_{0}^-)\cdot \nn =  0\ 
  \quad &\mbox{on}\quad \Sigma    
     \\[0.5ex] 
  \sv^+_{0} =  0
  \quad &\mbox{on} \quad \Gamma    \ .
           \end{array}
    \right.
\end{equation}
This problem is incompatible with the limit (as $\eps$ tends to zero) of the tangential part of the interface condition $\dsp \{ \bsigma(\sv_\eps,\sp_\eps)\cdot \nn  \}=\boldsymbol{S}(\sv_{\eps}^+ -\vepsm) + \sh $ on $ \Sigma$ so that it appears a boundary layer inside the porous medium.  We describe this boundary layer by an asymptotic expansion at any order with a WKB method.   We derive this expansion in power series of the small parameter $\sqrt\eps$ for both the velocity $\sv_{\eps}$ and the pressure $\sp_{\eps}$ which are solutions of the transmission problem. This expansion is justified rigorously by proving uniform estimates for remainders of this expansion, Theorem \ref{DAS-WKB}. 

An immediate corollary of our asymptotic expansion is the following convergence theorem:

\begin{thm}
\label{thm-conv}
We assume that  the data in \eqref{0SBBJ} satisfy:
$$
\sg^-\in \bH^5(\Omega_{-}),\quad   \sg^+\in \bH^{4}(\Omega_{+}),\quad  \sl\in \bH^{\frac{9}{2}}(\Sigma)\quad\mbox{and}\quad\sh\in \bH^{\frac{9}{2}}(\Sigma).
$$
Then, the solution $\sv_{\eps}$ for \eqref{0SBBJ}  given by Theorem \ref{thm32} satisfies:
$$\begin{array}{ll}
\sv_\eps^+(x) = \sv_0^+(x)  + \sr_\eps^+(x)\quad&\mbox{ for } x\in\Omega_{+}\\
\\
\sv_\eps^-(x) = \sv_0^-(x) +\Vmt_0(x, \frac{d(x)}{\sqrt \eps}) + \sr_\eps^-(x)\quad&\mbox{ for } x\in\Omega_{-}
\end{array}$$
where 
\begin{itemize}
\item $\sv_0$ is the solution of \eqref{SD-BJC}, 
\item $\Vmt_0$ is a boundary layer term of the form $\dsp \Vmt_0(x,\sz)=\sw_0(x) \exp (-\sqrt \kappa \sz)$,
\item $d(x)$ is the euclidean distance to $\Sigma$,
\item $\sr_\eps$  is a  remainder terms.
\end{itemize}
This remainder term satisfies the following estimate:
$$
 \eps \|\bd(\sr_\eps^{-}) \|_{0,\Omega_{-}}^2
  +  \frac {\kappa}{4} \|\sr_\eps^{-} \|^2_{0,\Omega_{-}}  
  + \frac{ \mu C^2}{3} \|{\sr}_\eps^{+} \|_{1,\Omega_{+}}^2
\leqslant
C\eps^{\frac{1}{2}}.$$
\end{thm}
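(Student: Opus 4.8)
The plan is to deduce Theorem~\ref{thm-conv} directly from the complete WKB expansion of Theorem~\ref{DAS-WKB}, keeping only its leading order. That expansion provides an ansatz of the form
\begin{equation*}
\sv_\eps^+ \sim \sum_{j\geq 0}\eps^{j/2}\,\sv_j^+(x),
\qquad
\sv_\eps^- \sim \sum_{j\geq 0}\eps^{j/2}\Big(\sv_j^-(x)+\Vmt_j\big(x,\tfrac{d(x)}{\sqrt\eps}\big)\Big),
\end{equation*}
together with analogous expansions for the pressures; here the regular terms $\sv_j^\pm$ live in the fixed domains $\Omega_\pm$ and the profiles $\Vmt_j(x,\sz)$ decay exponentially as $\sz\to+\infty$. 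Inserting this ansatz into \eqref{0SBBJ}, using the reformulation \eqref{putain} in $\Omega_-$, and collecting equal powers of $\sqrt\eps$ produces a cascade of problems that define the $\sv_j^\pm$ and the $\Vmt_j$; the statement retains only the order $j=0$.

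Next I would identify these leading terms. Away from $\Sigma$, the $\eps^0$ contributions show that $(\sv_0^+,\sp_0^+)$ solves the Stokes system in $\Omega_+$, while in $\Omega_-$ the term $-\eps\Delta\sv_\eps^-$ disappears at leading order and $(\sv_0^-,\sp_0^-)$ solves the Darcy law $\nabla\sp_0^-+\kappa\sv_0^-=\sg^-$; matching the interface conditions at order $\eps^0$ then produces exactly the Stokes--Darcy problem \eqref{SD-BJC}, whose well-posedness I would invoke to fix $\sv_0$. For the boundary layer I would pass to the stretched variable $\sz=d(x)/\sqrt\eps$ in \eqref{putain}: since $-\eps\Delta$ becomes $-\dzz$ to leading order, the profile $\Vmt_0$ solves $-\dzz\Vmt_0+\kappa\Vmt_0=0$ on $\sz>0$, whose only exponentially decaying solution is $\Vmt_0(x,\sz)=\sw_0(x)\exp(-\sqrt\kappa\,\sz)$. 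The amplitude $\sw_0$ is then determined by requiring that the tangential part of the mean interface condition $\{\bsigma(\sv_\eps,\sp_\eps)\cdot\nn\}=\boldsymbol{S}(\sv_\eps^+-\sv_\eps^-)+\sh$ — precisely the relation that \eqref{SD-BJC} cannot carry — be satisfied at leading order; this forces $\sw_0$ to be tangential, so that $\Vmt_0$ leaves the normal interface conditions unperturbed.

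I would then set $\sr_\eps^+:=\sv_\eps^+-\sv_0^+$ in $\Omega_+$ and $\sr_\eps^-:=\sv_\eps^--\sv_0^--\Vmt_0(\cdot,d(\cdot)/\sqrt\eps)$ in $\Omega_-$, and observe that $\sr_\eps$ is the solution of a transmission problem of the same type as \eqref{VP}. In the rigorous justification one in fact works with a higher-order approximant $\sum_{j=0}^N\eps^{j/2}(\sv_j+\Vmt_j)$, chosen so that it satisfies \eqref{0SBBJ} up to residuals that are small in the $\bL^2(\Omega)$ and $\bL^2(\Sigma)$ norms dual to the energy; the two-term statement then follows because the extra terms $\eps^{j/2}(\sv_j+\Vmt_j)$, $j\geq 1$, are themselves small in energy, the boundary-layer pieces having $\bL^2(\Omega_-)$-mass concentrated in a layer of width $\sqrt\eps$. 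The residuals of the approximant come from the neglected viscous term $\eps\Delta\sv_0^-$, from the tangential and curvature derivatives of the layer, and from the higher-order interface mismatches; under the hypotheses $\sg^-\in\bH^5(\Omega_-)$, $\sg^+\in\bH^4(\Omega_+)$, $\sl,\sh\in\bH^{9/2}(\Sigma)$ all the required derivatives of $\sv_0$ and $\sw_0$ exist and these residuals are controlled. Feeding them into the uniform stability estimate \eqref{E5} of Theorem~\ref{thm32}, applied to $\sr_\eps$, then yields the announced bound by $C\eps^{1/2}$.

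The main difficulty, to my mind, is not the interior equations but the bookkeeping of the interface residuals, which is also what governs the final rate. The normal-jump contribution enters the energy \eqref{E5} with the amplifying weight $\eps^{-1}$, so the approximant must cancel the normal component $[\sv\cdot\nn]$ of the interface jump to an order strictly higher than the tangential quantities; this is the very reason half-integer powers of $\eps$ appear (hence the expansion in $\sqrt\eps$) and the reason such high Sobolev regularity is demanded of the data. A second delicate point is that the profile $\sw_0\exp(-\sqrt\kappa\,\sz)$ is not exactly divergence free: it must be complemented by a small normal corrector so that $\Div\Vmt_0=O(\sqrt\eps)$, and one then has to check that the trace of this corrector does not spoil the $\eps^{-1}$-weighted normal-jump estimate. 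Verifying this compatibility between the layer construction and the weighted energy is where I expect the real work to lie.
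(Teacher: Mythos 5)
Your proposal follows essentially the same route as the paper: construct the full WKB hierarchy in powers of $\sqrt\eps$, establish the uniform remainder estimate for a higher-order truncation via the $\eps$-uniform stability of Theorem~\ref{thm32} (this is Theorem~\ref{DAS-WKB} with $\bk=5$, $k=3$, including the divergence corrector $\psi_{k,\eps}$ you anticipate), and then absorb the order $j\geq 1$ terms into the remainder using the $\sqrt\eps$-width concentration of the layer profiles. The argument is correct and matches the paper's, down to the identification of the $\eps^{-1}$-weighted normal jump as the step that fixes the final rate $\eps^{1/2}$.
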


 The concept of WKB expansion is rather classical in the modeling of problems arising in fluid mechanics. For instance in \cite{CF03,C04,C08} the authors derive WKB expansions with boundary layer terms or thin layer asymptotics to describe penalization methods in the context of viscous incompressible flow.

In this work, one difficulty to validate the WKB expansion lies in the proof  of both existence and  regularity results for one part of the asymptotics  which appear  in this expansion at any order and which solve Darcy-Stokes problems with non-standard transmission conditions. It is  possible to tackle these problems by carefully introducing a Dirichlet--to--Neumann operator which lead us to prove well-posedness results  and elliptic regularity results simply for the Stokes operator with mixed boundary conditions.

The outline of the paper proceeds as follows. We prove the well-posedness result for Problem \eqref{VP} together with uniform estimates  with respect to the  small parameter is  Section \ref{UE}.  In Sections \ref{AE} and \ref{SDERA}, one exhibits a formal WKB expansion  for the solution of the transmission problem.  The equations satisfied by the asymptotics at any order are explicited in section \ref{SDERA} and existence and regularity results concerning the asymptotics which satisfy Darcy-Stokes problems with non-standard transmission conditions are claimed in Prop. \ref{profilbarre}. The proof of this proposition is postponed to section \ref{secregprof}. In section \ref{SERem}, one proves uniform errors estimates to validate this WKB expansion.

\section{Uniform estimates}
\label{UE}

In this section, we prove the well-posedness result for Problem \eqref{VP} together with uniform estimates, Theorem \ref{thm32}. 

\begin{notn}
We denote by $\| \cdot \|_{s,\mathcal{O}}$ the norm in the Sobolev space $\H^s(\mathcal{O})$. 
\end{notn}

{\em Proof of Theorem \ref{thm32}.}
Since $\Omega_+$ is a lipschitz bounded domain, since $\sv^+=0$ on $\partial \Omega$ for $\sv \in V$, we have the following  Poincar\'e inequality in $\Omega_+$:
\begin{equation}
\exists C>0 \quad \mbox{s.t.}\quad   \forall \sv \in V \quad \| \bd( \sv^+) \|_{0,\Omega_{+}} \geqslant C \| \sv^+\|_{1,\Omega_{+}} \ ,
\end{equation}
then, there hold $\forall \sv \in  V$, 
\begin{multline*}
  a_{\eps} (\sv,\sv)\geqslant
 2 \eps \|\bd(\sv^-) \|_{0,\Omega_{-}}^2
  + \kappa\| \sv^- \|^2_{0,\Omega_{-}}  
  + 2 \mu C^2 \|\sv^+ \|_{1,\Omega_{+}}^2
+\eps^{-1} \| [\sv  \cdot \nn ] \|^2_{0,\Sigma}
\\
+\beta 
\| \{\sv \cdot \nn\}   \|^2_{0,\Sigma}
+  \alpha\| \sv_{\tau}  \|^2_{0,\Sigma}
 \ .
\end{multline*}
Hence, $ a_{\eps}$ is $V$-coercive, and according to the Lax-Milgram Lemma, the problem  \eqref{VP} has a unique solution $\sv_{\eps}\in V$ for all $\eps>0$.  We also infer : if $\sv$ satisfies \eqref{VP}, then $\forall \sv \in  V$, 
\begin{multline*}
 2\eps \|\bd(\sv^-) \|_{0,\Omega_{-}}^2
  + \kappa\| \sv^- \|^2_{0,\Omega_{-}}  
  +  2\mu C^2 \|\sv^+ \|_{1,\Omega_{+}}^2
+\eps^{-1} \| [\sv  \cdot \nn ] \|^2_{0,\Sigma}
+\beta 
\| \{\sv \cdot \nn\}   \|^2_{0,\Sigma}
+  \alpha\| [\sv_{\tau} ] \|^2_{0,\Sigma}
\\
\leqslant
  \|\sv^+\|_{0,\Omega_{+}}   \|\sg^+\|_{0,\Omega_{+}}
+ \|\sv^-\|_{0,\Omega_{-}}   \|\sg^-\|_{0,\Omega_{-}}  +\| [\sv  ] \|_{0,\Sigma} \|\sh \|_{0,\Sigma}
  +\| \{\sv\} \|_{0,\Sigma} \|\sl \|_{0,\Sigma}
  \ .
\end{multline*}
Let $\eps>0$ such that $\alpha\leqslant \frac{1}{2\eps}$. Then, for all $\eta_{1},\eta_{2}>0$, there holds
\begin{multline}
\label{E2}
2 \eps \|\bd(\sv^-) \|_{0,\Omega_{-}}^2
  + \kappa\| \sv^- \|^2_{0,\Omega_{-}}  
  + 2 \mu C^2 \|\sv^+ \|_{1,\Omega_{+}}^2
+  \frac{1}{2\eps} \| [\sv  \cdot \nn ] \|^2_{0,\Sigma}
+\beta 
\| \{\sv \cdot \nn\}   \|^2_{0,\Sigma}
+  \alpha\| [\sv ] \|^2_{0,\Sigma}
\\
\leqslant
  \frac{\eta_{1}}{2}  \|\sv\|^2_{0,\Omega} + \frac1{2\eta_{1}}  \|\sg\|^2_{0,\Omega}
+\frac{\eta_{2}}{2} \| [\sv  ] \|^2_{0,\Sigma} +\frac1{2\eta_{2}}   \|\sh \|^2_{0,\Sigma}
  +\| \{\sv \cdot\nn\}\|_{0,\Sigma} \|\sl\|_{0,\Sigma}+\| \{\sv_{\tau}\} \|_{0,\Sigma} \|\sl \|_{0,\Sigma}
  \ .
\end{multline} 
We treat hereafter the last two terms : for all $\eta_{3}>0$, there holds
$$
\| \{\sv \cdot\nn\}\|_{0,\Sigma} \|\sl\|_{0,\Sigma}
\leqslant  \frac{\eta_{3}}{2}  \| \{\sv \cdot\nn\}\|^2_{0,\Sigma}
+ \frac1{2\eta_{3}} \|\sl\|^2_{0,\Sigma} \ .
$$
Since $\{\sv_{\tau}\}=\sv^+_{\tau}-\frac12 [\sv_{\tau} ]$, there holds
$$
\| \{\sv_{\tau}\} \|_{0,\Sigma} \|\sl \|_{0,\Sigma}
\leqslant 
(\|\sv^+_{\tau}\|_{0,\Sigma} + \frac12\| [\sv_{\tau} ] \|_{0,\Sigma} )  \|\sl \|_{0,\Sigma}\ ,
$$ 
hence, using a trace inequality, there exists $c>0$ such that 
$$
\| \{\sv_{\tau}\} \|_{0,\Sigma} \|\sl \|_{0,\Sigma}
\leqslant 
(c \|\sv^+\|_{1,\Omega_{+}} + \frac12\| [\sv_{\tau} ] \|_{0,\Sigma} )  \|\sl \|_{0,\Sigma}\ .
$$ 
We infer : for all $\eta_{4}, \eta_{5}>0$, 
$$
  \| \{\sv_{\tau}\} \|_{0,\Sigma} \|\sl \|_{0,\Sigma}
\leqslant 
 \frac{\eta_{4} c}{2}  \|\sv^+\|^2_{1,\Omega_{+}} 
 +  \frac{c}{2\eta_{4}}  \| \sl  \|^2_{0,\Sigma} 
 +\frac{\eta_{5}}{4}  \| [\sv_{\tau} ] \|^2_{0,\Sigma}
+ \frac1{4\eta_{5}} \|\sl\|^2_{0,\Sigma} 
 \ .
$$
According to \eqref{E2}, we obtain
\begin{multline}
\label{E3}
 2 \eps \|\bd(\sv^-) \|_{0,\Omega_{-}}^2
  + \kappa\| \sv^- \|^2_{0,\Omega_{-}}  
  + 2 \mu C^2 \|\sv^+ \|_{1,\Omega_{+}}^2
+  \frac{1}{2\eps} \| [\sv  \cdot \nn ] \|^2_{0,\Sigma}
+  \alpha\| [\sv ] \|^2_{0,\Sigma}
+\beta 
\| \{\sv \cdot \nn\}   \|^2_{0,\Sigma}
\\
\leqslant
  \frac{\eta_{1}}{2}  \|\sv\|^2_{0,\Omega} 
  +\frac{\eta_{2}}{2} \| [\sv  ] \|^2_{0,\Sigma}
  +\frac{\eta_{3}}{2}  \| \{\sv \cdot\nn\}\|^2_{0,\Sigma}
 + \frac{\eta_{4} c }{2}  \|\sv^+\|^2_{1,\Omega_{+}} 
 +\frac{\eta_{5}}{4}  \| [\sv_{\tau} ] \|^2_{0,\Sigma}
 \\
  + \frac1{2\eta_{1}}  \|\sg\|^2_{0,\Omega}
+\frac1{2\eta_{2}}   \|\sh \|^2_{0,\Sigma}
+ (\frac1{2\eta_{3}} +  \frac{c}{2\eta_{4}} + \frac1{4\eta_{5}}) \|\sl\|^2_{0,\Sigma} 
  \ .
\end{multline} 
We fix constants $\eta_{i}$ such that 
$$ \frac{\eta_{4} c }{2}\leqslant \frac{ \mu C^2}{3}\ ,
\quad \frac{\eta_{1}}{2} \leqslant \min(\frac {\kappa}{2},\frac{ \mu C^2}{3}) \ ,
\quad  \frac{\eta_{2}}{2} + \frac{\eta_{5}}{4} \leqslant \frac{\alpha}{2} \ ,
\quad  \frac{\eta_{3}}{2} \leqslant \frac{\beta}{2} \ .
$$ 
According to \eqref{E3}, we infer
\begin{multline}
\label{E4}
 \eps \|\bd(\sv^-) \|_{0,\Omega_{-}}^2
  +  \frac {\kappa}{4} \| \sv^- \|^2_{0,\Omega_{-}}  
  + \frac{ \mu C^2}{3} \|\sv^+ \|_{1,\Omega_{+}}^2
+  \frac{1}{4\eps} \| [\sv  \cdot \nn ] \|^2_{0,\Sigma}
+  \frac{\alpha}{4} \| [\sv ] \|^2_{0,\Sigma}
+\frac{\beta}{4}
\| \{\sv \cdot \nn\}   \|^2_{0,\Sigma}
\\
\leqslant
C(\mu, \kappa, \alpha, \beta)
\left(  \|\sg\|^2_{0,\Omega}
+  \|\sh \|^2_{0,\Sigma} 
+ \|\sl\|^2_{0,\Sigma} 
\right)
  \ .
\end{multline} 

\qed

\section{Formal Asymptotic Expansion}
\label{AE}

\begin{hyp}
From now on, we assume that the surfaces $\Sigma$ (interface) and $\Gamma$ (external boundary) are smooth. 
\end{hyp}

\begin{notn}
If $X$ is a vector field defined on $\Sigma$, we denote by $X_\tau$ the tangent components of $X$: $\dsp X_\tau(x)=X(x)-(X(x)\cdot \nn(x))\nn(x)$, so that $X=(X\cdot \nn)\nn + X_\tau$. For example, we  denote by $(\bd(\sv_{\eps}).\nn)_\tau$ the tangent components of the normal constraint $\bd(\sv_{\eps}).\nn$ defined on the interface  $\Sigma$.
\end{notn}

Rewriting the
 transmission conditions set on $\Sigma$ in \eqref{0SBBJ}, we use Formulation \eqref{putain} to obtain the following equivalent problem,

\begin{align}
 \label{Das1}  & - \eps \Delta \sv_{\eps}^{-} + \nabla \sp_{\eps}^-+\kappa\sv_{\eps}^{-}=\sg^-
& \mbox{in}& \quad\Omega_{-}
\\[0.5ex]
 \label{Das2} & -\nabla \cdot  \bsigma^+(\sv_{\eps}^+,\sp_{\eps}^+ )=\sg^+ 
 & \mbox{in}&\quad\Omega_{+}
\\[0.5ex]
 \label{Das3} & \nabla\cdot \sv_{\eps}^-=0 &\mbox{in}&\quad\Omega_{-}
\\[0.5ex]
 \label{Das4}  &\nabla\cdot \sv_{\eps}^+=0 &\mbox{in}&\quad\Omega_{+}
\\[0.5ex]  
 \label{Das5} &2 \mu \bd (\sv_{\eps}^+)\cdot \nn -\sp_{\eps}^+ \nn =2\eps \bd (\sv_{\eps}^-)\cdot \nn - \sp_{\eps}^-\nn +\frac{\beta}{2} \left( ( \sv_{\eps}^++\vepsm)\cdot \nn\right) \nn + \sl & \mbox{on}& \quad\Sigma
 \\[0.5ex] 
 \label{Das7}  & \alpha(\sv_{\eps}^+ -\vepsm)_\tau = 2\mu \left(\bd(\sv_\eps^+)\cdot \nn \right)_{\tau}  -(\sh+\frac 1 2 \sl)_\tau 
& \mbox{on}&\quad \Sigma   
    \\[0.5ex] 
     \label{Das8}  &  \eps^{-1} (\sv_{\eps}^+ -\vepsm)\cdot \nn =  2 \mu(\bd(\sv_\eps^+)\cdot \nn)\cdot\nn -\sp_{\eps}^+ -\frac{\beta}{4}( \sv_{\eps}^++\vepsm)\cdot \nn -(\sh+\frac 1 2 \sl) \cdot \nn
&  \mbox{on}&\quad \Sigma  \ 
    \\[0.5ex] 
    \label{Das9}  &  \sv^+_{\eps} =  0
  \quad &\mbox{on}&\quad \Gamma    
\end{align}
(we recall that $\dsp \nabla \cdot  \bsigma^+(\sv_{\eps}^+,\sp_{\eps}^+ )=\mu\Delta \sv_{\eps}^+- \nabla \sp_{\eps}^+ $)

\vspace{2mm}

As already said, when $\eps$ tends to zero, we formally converge to Problem \eqref{SD-BJC}. The well posedness of this limit system is establish in Section \ref{secregprof}. This problem is incompatible with the limit of the jump condition \eqref{Das7} on the tangential velocity:
$$
 \alpha(\sv_{0}^+ -\sv_{0}^-)_\tau = 2 \mu \left(\bd(\sv_0^+)\cdot \nn \right)_{\tau}  -(\sh+\frac 1 2 \sl)_\tau \quad 
 \mbox{on}\quad \Sigma  \ . $$
Hence, it appears a boundary layer that we describe with a multiscale method, namely a WKB expansion: 
we exhibit series expansions in powers of $\sqrt{\eps}$ for the flow $\sv_{\eps}$, and the pressure $\sp_{\eps}$. In the fluid part, we do not expect the formation of a boundary layer, therefore we look for an {\it Ansatz} on the form:\begin{gather}
\label{Esv+}
   \vepsp(x)  \approx \sum_{j\geqslant0} {\eps}^{\frac j 2} \Vplus_j(x) \, ,
   \\
    \label{Esp+}
   \sp^+_{\eps}(x)  \approx \sum_{j\geqslant0} {\eps}^{\frac j 2}\Pplus_j(x)\,.
   \end{gather}
   
   In the porous part, we  denote by $d: \Omega_{-}\to \R^+$ the euclidean distance to $\Sigma$, $d(x) = \textrm{dist} (x, \Sigma)$.     We  describe the velocity and the pressure on the following way:  
    \begin{gather}
   \label{Esv-}
   \vepsm(x)  \approx   \sum_{j\geqslant0} {\eps}^{\frac j 2} \sv^-_j(x,\frac{d(x)}{\sqrt \eps}) \, ,
 \quad\mbox{with}\quad   
   \sv^-_j(x,\sz) =
\Vmb_{j}(x)+ \Vmt_j(x,\sz)\, , \\
  \label{Esp-}
   \sp^-_{\eps}(x)  \approx \sum_{j\geqslant0} {\eps}^{\frac j 2} \sp^-_j(x,\frac{d(x)}{\sqrt \eps}) \ ,
\quad\mbox{with}\quad   
    \sp^-_j(x,\sz) =
   \pmob_{j}(x)     + \fp_j(x,\sz)\, .
\end{gather}
The terms $ \Vmt_j$ and $ \fp_j$ are boundary layer terms and defined on $\Sigma\times\R^+$. They are required to  tend to $0$ (such as their derivatives) when $\sz\to \infty$.

 We use the following notations (see also \cite{CF03}) : $x=(x_{1},x_{2}, x_{3})$ denotes the cartesian coordinates in $\R^3$, 
 $$ 
 \partial_{i}=\frac{\partial }{\partial x_{i}}\ ,\quad \partial_\sz=\frac{\partial }{\partial \sz}\ ,\quad \partial_{\sz \sz}=\frac{\partial^2}{\partial \sz ^2}
 ,\quad 
\nabla \fp =(\partial_1 \fp , \partial_2 \fp, \partial_3 \fp)^T. $$
 For  $\dsp  \fv=(\widetilde{\mathrm{v}}^{-,1},\widetilde{\mathrm{v}}^{-,2},\widetilde{\mathrm{v}}^{-,3})^T,$
$$
 \nabla \cdot \fv =\partial_1 \widetilde{\mathrm{v}}^{-,1}+ \partial_2 \widetilde{\mathrm{v}}^{-,2} +\partial_3 \widetilde{\mathrm{v}}^{-,3} 
, \quad ( \bd (\fv))_{ij}= \frac{1}{2} \left( \dd{\widetilde{\mathrm{v}}^{-,j}}{x_i} + \dd{\widetilde{\mathrm{v}}^{-,i}}{x_j}\right)
 , $$
 $$ (\nabla d\cdot \nabla)\dz \fv=\sum_{j=1}^3 \dd{d}{x_j}\frac{\partial^2  \widetilde{\mathrm{v}}^{-}}{\partial x_j\partial\sz }
 , \quad \Delta \fv = \partial_{1}^2 \fv + \partial_{2}^2 \fv+\partial_{3}^2 \fv.$$

We recall that $\nn$ is the outward unit normal on $\partial \Omega_{-}=\Sigma$.  Since we assume that $\Sigma$ is a regular manifold, then $d$ is smooth in a neighborhood of $\Sigma$. In this neighborhood, $\vert \nabla d\vert=1$. In addition, for $x\in \Sigma$, $\nabla d (x)= -\nn(x)$ so that $\partial_\nn d = -1$ on $\Sigma$. We extend $\nn$ by setting $\nn(x)=-\nabla d(x)$.  If $X$ is a vector field defined in $\Omega_{-}$, in a neighborhood of $\Sigma$, we define the tangential part of $X$ by $\dsp X_\tau (x)=X(x)-(X(x)\cdot \nn(x))\nn(x)$.

\vspace{2mm}

By simple calculations there holds
$$\begin{array}{rl}\dsp 
\Delta \left(x\mapsto \Vmt(x,\frac{d(x)}{\sqrt \eps})\right) = & \dsp \eps^{-1} \partial_{\sz \sz}\fv (x,\frac{d(x)}{\sqrt \eps})
+\eps^{-\frac 1 2}\left(2(\nabla d \cdot  \nabla)  \partial_\sz \fv + \Delta d \ \dz\fv\right)(x,\frac{d(x)}{\sqrt \eps})\\
&\dsp 
+\Delta\Vmt (x,\frac{d(x)}{\sqrt \eps})
\end{array}$$
$$
\nabla \left(x\mapsto\fp(x;\frac{d(x)}{\sqrt \eps})\right) =\eps^{-\frac 1 2}\nabla d \ \dz\fp(x,\frac{d(x)}{\sqrt \eps})+ \nabla\fp (x,\frac{d(x)}{\sqrt \eps})
$$
$$
\nabla \cdot \left(x\mapsto\Vmt(x;\frac{d(x)}{\sqrt \eps})\right) = \eps^{-\frac 1 2}\nabla d \cdot \dz \Vmt(x,\frac{d(x)}{\sqrt \eps})+ \nabla\cdot \Vmt(x,\frac{d(x)}{\sqrt \eps}) 
$$
$$\bd\left(x\mapsto\Vmt(x,\frac{d(x)}{\sqrt \eps})\right)  \cdot \nn =-\eps^{-\frac 1 2} \frac12\left( \dz \Vmt + (\dz \Vmt\cdot \nn)\nn\right) (x;\frac{d(x)}{\sqrt \eps}) +\bd( \Vmt) \cdot \nn (x;\frac{d(x)}{\sqrt \eps})  \quad \mbox{on} \quad \Sigma\ .$$
We insert the {\it Ansatz} \eqref{Esv+}-\eqref{Esp+}-\eqref{Esv-}-\eqref{Esp-} in equations \eqref{Das1}-\eqref{Das9} and we perform the identification of terms with the same power in $\sqrt\eps$.   

\subsection{Asymptotic expansion of  \eqref{Das1}}

\paragraph*{Order $\eps^{-\frac{1}{2}}$} There holds $\nabla d \ \dz \fp_{0}=0$, and since  $\fp_{0}\to 0$ when $\sz \to \infty$, we infer 
\begin{equation}
\label{fp0}
\fp_{0}=0 \quad \mbox{in} \quad \Omega_{-}\times \R^+\ .
\end{equation}

\paragraph*{Order $\eps^{0}$} According to \eqref{fp0}, $(\sv^-_{0},\sp_{0}^-)$ solves  
\begin{equation}
\label{sv0sp0}
-\dzz\fv_{0} + \kappa \Vmt_{0} +\kappa \Vmb_{0}+\nabla\pmob_{0} +\nabla d \ \dz\fp_{1}=\sg^-
 \quad \mbox{in} \quad \Omega_{-}\times \R^+\ .
\end{equation}
 Hence, taking the limit of this equation when $\sz\to \infty$, since $\Vmt_0$ and all its derivatives tends to zero when $\sz$ tends to $+\infty$, we infer 
\begin{equation}
\label{sv0b}
 \kappa \Vmb_{0}  +\nabla\pmob_{0} =\sg^- \quad \mbox{in} \quad \Omega_{-}\ ,
\end{equation}
and by difference with the previous equation, we obtain
\begin{equation}
\label{fv0}
-\dzz \fv_{0} + \kappa \fv_{0}  +\nabla d \ \dz\fp_{1}=0 \quad \mbox{in} \quad \Omega_{-}\times \R^+\ .
\end{equation}

\paragraph*{Order $\eps^{\frac{1}{2}}$}  $(\sv^-_{1},\sp_{1}^-)$ solves 
\begin{equation}
\label{v1p1-}
-\dzz \fv_{1} + \kappa \Vmt_{1} +\kappa \Vmb_{1} +\nabla\pmob_{1}(x)   +\nabla\fp_{1} +\nabla d \ \dz\fp_{2}=( 2 (\nabla d \cdot  \nabla)  \dz \fv    _{0} + \Delta d \ \dz\fv_{0}) \ ,
\end{equation}
and taking the limit of this equation when $\sz\to \infty$, we infer 
\begin{equation}
\label{sv1b}
 \kappa \Vmb_{1}(x)  +\nabla\pmob_{1}(x)  =0 \quad \mbox{in} \quad \Omega_{-}\ .
\end{equation}
By difference with the previous equation, we obtain
\begin{equation}
\label{fv1}
-\dzz \fv_{1} + \kappa \fv_{1}    +\nabla\fp_{1} +\nabla d \ \dz\fp_{2}= 2 (\nabla d \cdot  \nabla)  \fv    _{0,Z} + \Delta d \  \dz \fv    _{0}  \ , \quad \mbox{in} \quad \Omega_{-}\times \R^+\ .
\end{equation}

\paragraph*{Order $\eps^{\frac{j}{2}}$, $j\geqslant 2$ }  $(\sv^-_{j},\sp_{j}^-)$ solves 
\begin{multline}
\label{vj-}
-\dzz \fv_{j} + \kappa \Vmt_{j} +\kappa \Vmb_{j}  +\nabla\pmob_{j} +\nabla\fp_{j} +\nabla d \ \dz\fp_{j+1}
\\
=( 2 (\nabla d \cdot  \nabla)  \dz \fv    _{j-1} + \Delta d \ \dz\fv_{j-1}) +   \Delta (\Vmb_{{j-2}}+\fv_{j-2}) \ ,
\end{multline}
hence, we infer
\begin{equation}
\label{svjb}
 \kappa \Vmb_{j}  +\nabla\pmob_{j}  = \Delta\Vmb_{{j-2}} \quad \mbox{in} \quad \Omega_{-}\ ,
\end{equation}
\begin{multline}
\label{svjt}
-\dzz \fv_{j} + \kappa \fv_{j}  +\nabla\fp_{j} +\nabla d \ \dz \fp_{j+1}
\\
=( 2 (\nabla d \cdot  \nabla)  \dz \fv    _{j-1} 
+ \Delta d \ \dz \fv_{j-1}) +   \Delta \fv_{j-2}  \quad \mbox{in} \quad \Omega_{-}\times \R^+\ .
\end{multline}

\subsection{Asymptotic expansion of \eqref{Das2}}

\paragraph*{Order $\eps^{0}$}  $ \Vplus_{0} $ satisfies 
\begin{equation}
\label{sv+0}
- \nabla \cdot  \bsigma^+( \Vplus_{0} , \Pplus_{0})=\sg^+ \quad \mbox{in} \quad \Omega_{+}\ .
\end{equation}

\paragraph*{Order $\eps^{\frac j 2}$, $j\geqslant 1$} 
For all $j\in\N$, $ \Vplus_{j} $ satisfies 
\begin{equation}
\label{sv+j}
-  \nabla \cdot \bsigma^+(  \Vplus_{j} , \Pplus_{j})=0 \quad \mbox{in} \quad \Omega_{+}\ .
\end{equation}

\subsection{Asymptotic expansion of \eqref{Das3}}

\paragraph*{Order $\eps^{-\frac{1}{2}}$} There holds 
\begin{equation}
\label{fv0nbis}
\nabla d \cdot \dz \fv_{0}=0 \quad \mbox{in} \quad \Omega_{-}\times \R^+\ .
\end{equation}
Since $\nabla d = -\nn $, since $\Vmt_0$ tends to zero when $\sz$ tends to $+\infty$, we obtain 
\begin{equation}
\label{fv0n}
\fv_{0}\cdot \nn=0 \quad \mbox{in} \quad \Omega_{-}\times \R^+\ .
\end{equation}

\paragraph*{Order $\eps^{\frac{j}{2}}$, $j \geqslant 0$} 
There holds 
\begin{equation}
\label{fvjnbis}
\nabla d \cdot  \dz \fv_{j+1}+ \nabla\cdot\Vmb_{j}  +\nabla\cdot \fv_j=0 \quad \mbox{in} \quad \Omega_{-}\times \R^+\ .
\end{equation}
We infer  
\begin{gather}
\label{divvbj}
 \nabla \cdot\Vmb_{j} =0 \quad \mbox{in} \quad \Omega_{-} \, ,
\\  
\label{divfvj}
  \nabla d \cdot  \dz \fv_    {j+1} +\nabla\cdot \fv_j=0 \quad \mbox{in} \quad \Omega_{-}\times \R^+ 
  \, .
\end{gather}

\subsection{Asymptotic expansion of  \eqref{Das4}}

\paragraph*{Order $\eps^{\frac{j}{2}}$} For all $j\in\N$, $ \Vplus_{j} $ satisfies 
\begin{equation}
\label{divsv+j}
\nabla \cdot \Vplus_{j} =0 \quad \mbox{in} \quad \Omega_{+}\ .
\end{equation}

\subsection{Asymptotic expansion of  \eqref{Das5}}

\paragraph*{Order $\eps^{0}$.}

\begin{equation}
\label{dnsv0+}
2\mu \bd( \Vplus_0) \cdot \nn - \Pplus_0 \nn = -\pmob_{0}\nn - \fp_{0}(\cdot , 0)\nn  +\frac \beta 2\left((\Vplus_{0}+\Vmb_{0}+\Vmt_0(\cdot , 0))\cdot \nn\right) \nn +\sl  \quad \mbox{on} \quad \Sigma 
\end{equation}

\paragraph*{Order  $\eps^{\frac{1}{2}}$.}

\begin{equation}
\label{dnsv1t}
\begin{array}{rl}
\dsp 
2\mu \bd (\Vplus_1)\cdot \nn - \Pplus_1\nn = &  \dsp - \frac12 \left(\dz\fv_{0}(\cdot , 0) + (\dz\fv_{0}(\cdot , 0) \cdot \nn) \nn \right)  -\pmob_{1}\nn - \fp_{1}(\cdot , 0) \nn 
\\
&\dsp +\frac \beta 2\left( (\Vplus_{1}+\Vmb_{1}+\Vmt_1(\cdot , 0))\cdot \nn \right) \nn \quad \mbox{on} \quad \Sigma\end{array}
\end{equation}

\paragraph*{Order $\eps^{\frac{j}{2}}$, $j\geqslant 2$.}
\begin{equation}
\label{dnsvjt}
\begin{array}{rl}
\dsp 2\mu \bd (\Vplus_j) \cdot \nn  -\Pplus_{j} \nn          =   & \dsp -\frac12 \left( \dz\fv_{j-1}(\cdot,0) + (\dz\fv_{j-1}(\cdot,0)\cdot \nn) \nn \right)  +2 \bd( \Vmb_{j-2})\cdot \nn\\
\\
&\dsp +2\bd(\Vmt_{j-2})(\cdot ,0)\cdot \nn 
\dsp -\pmob_{j}\nn-\fp_{j}(\cdot , 0)\nn   \\ \\
\dsp &\dsp + \frac \beta 2\left((\Vplus_{j}+\Vmb_{j}+\Vmt_j(\cdot , 0))\cdot \nn\right)\nn \
 \quad \mbox{on}\quad \Sigma  \, .
 \end{array}
\end{equation}

\subsection{Asymptotic expansion of  \eqref{Das7}}

\paragraph*{Order $\eps^{0}$} 
\begin{equation}
\label{sv0tz=0}
 \alpha(\Vplus_{0} -\Vmb_0-\Vmt_0(\cdot , 0))_\tau =2 \mu\left(\bd(\Vplus_{0})\cdot \nn \right)_\tau -\left(\sh +\frac 1 2 \sl\right)_\tau \quad \mbox{on}\quad \Sigma   
\end{equation}

\paragraph*{Order $\eps^{\frac{j}{2}}$, $j\geqslant 1$} 
\begin{equation}
\label{svjtz=0}
 \alpha(\Vplus_{j} -\Vmb_{j}-\Vmt_{j}(\cdot , 0))_\tau =2 \mu\left(\bd(\Vplus_{j})\cdot \nn\right)_\tau \quad \mbox{on}\quad \Sigma   
\end{equation}

\subsection{Asymptotic expansion of \eqref{Das8}}

\paragraph*{Order $\eps^{0}$} 

\begin{equation}
\label{sv0n}
(\Vplus_{0} -\Vmb_0-\Vmt_0(\cdot , 0))\cdot \nn= 0
 \quad \mbox{on}\quad \Sigma
\end{equation}

\paragraph*{Order $\eps^{\frac{1}{2}}$} 

\begin{equation}
\label{sv1n}
(\Vplus_{1} -\Vmb_1-\Vmt_1(\cdot , 0))\cdot \nn= 0
 \quad \mbox{on}\quad \Sigma
\end{equation}

\paragraph*{Order $\eps$}

\begin{equation}
\label{sv2n}
 (\Vplus_{2} -\Vmb_2-\Vmt_2(\cdot , 0))\cdot \nn =   2\mu\left(\bd(\Vplus_{0})\cdot \nn \right)\cdot\nn -\Pplus_{0} -\frac \beta 4 \left (\Vplus_{0} +\Vmb_{0}+\Vmt_{0}(\cdot , 0)\right)\cdot \nn -(\sh+ \frac 1 2 \sl)\cdot \nn 
  \quad \mbox{on}\quad \Sigma    \ 
\end{equation}

\paragraph*{Order $\eps^{\frac{j}{2}}$, $j > 2$}

\begin{equation}
\label{svjn}
 (\Vplus_{j} -\Vmb_j-\Vmt_j(\cdot , 0))\cdot \nn =  2 \mu\left(\bd(\Vplus_{j-2})\cdot \nn \right)\cdot\nn -\Pplus_j -\frac{ \beta}{4}\left (\Vplus_{j-2} +\Vmb_{j-2}+\Vmt_{j-2}(\cdot , 0)\right)\cdot \nn 
  \quad \mbox{on}\quad \Sigma    \ .
\end{equation}

\subsection{Asymptotic expansion of  \eqref{Das9}}

\paragraph*{Order $\delta^{j}$}  For all $j\in\N$, $ \Vplus_{j} $ satisfies the external boundary condition
\begin{equation}
\label{v+j}
\Vplus_{j} =0 \quad \mbox{in} \quad \Gamma\ .
\end{equation}

\section{Determination, existence and regularity of the asymptotics}
\label{SDERA}

\vspace{2mm}
Let $\bk\geqslant 1$. We assume that the data in \eqref{Das1}-\eqref{Das9} satisfy:
\begin{equation}
\label{regdata}
\sg^-\in \bH^\bk(\Omega_{-}),\quad   \sg^+\in \bH^{\bk-1}(\Omega_{+}),\quad  \sl\in \bH^{\bk-\frac{1}{2}}(\Sigma)\quad\mbox{and}\quad\sh\in \bH^{\bk-\frac{1}{2}}(\Sigma).
\end{equation}

\subsection{Equations satisfied by asymptotics of order 0}
\label{sec-as0}
\subsubsection{Determination of $ \fp_{0}$ and $ \fv_{0}\cdot\nn$}

We have already obtained in  \eqref{fp0} and $\eqref{fv0n}$ that 
\begin{equation}
\label{Efp0}
\fp_{0}=0 \mbox{ and }  \fv_{0}\cdot \nn =0 \quad \mbox{in} \quad \Omega_{-}\times \R^+ \ .
\end{equation}

\subsubsection{Determination of $\mathrm{v}^-_{0}$, $\mathrm{p}^-_{0}$  $\Vplus_{0}$ and $\Pplus_{0}$}

According to \eqref{sv0n} and \eqref{fv0n}, we infer 
$$
 (\Vplus_{0} -\Vmb_{0} )\cdot \nn=0 \quad \mbox{on} \quad \Sigma \ .
 $$ 

According to  and \eqref{fp0} and \eqref{dnsv0+}
$$
2 \mu \bd(\Vplus_{0})\cdot\nn -\Pplus_{0}\nn = -\pmob_{0}\nn + \frac{\beta}{2}\left((\Vplus_{0}+\Vmb_{0})\cdot \nn\right)\nn + \sl \quad \mbox{on} \quad \Sigma\ .
$$

Finally, according to \eqref{sv0b}, \eqref{divvbj}, \eqref{dnsv0+}, and according to \eqref{sv+0} and \eqref{divsv+j} when $j=0$, $(\mathrm{v}^-_{0},\Vplus_{0})$, and $(\mathrm{p}^-_{0},\Pplus_{0})$ satisfy the transmission problem :
\begin{equation}
\label{SBBJ0}
 \left\{
   \begin{array}{lll}
    \kappa\Vmb_{0} + \nabla\pmob_{0}=\sg^-
 \quad&\mbox{in}\quad \Omega_{-}
\\[0.5ex]
\nabla\cdot\Vmb_{0}=0 \quad&\mbox{in}\quad \Omega_{-}\ .
\\[0.5ex]
 - \nabla \cdot \bsigma^+( \Vplus_{0} , \Pplus_{0})=\sg^+
 \quad&\mbox{in}\quad \Omega_{+}
\\[0.5ex]
\nabla\cdot \Vplus_{0}=0 \quad&\mbox{in}\quad \Omega_{+}
\\[0.5ex]
(\Vplus_{0} -\Vmb_{0} )\cdot \nn=0 \quad &\mbox{on}\quad \Sigma
 \\[0.5ex]
 2\mu\bd(\Vplus_{0})\cdot\nn -\Pplus_{0}\nn = -\pmob_{0}\nn + \frac{\beta}{2}\left((\Vplus_{0}+\Vmb_{0})\cdot \nn\right)\nn + \sl
 \quad &\mbox{on} \quad \Sigma
\\[0.2ex]
\Vplus_{0}=0
  \quad &\mbox{on}\quad \Gamma \ .
   \end{array}
    \right.
\end{equation}

Let us claim the following existence and regularity result concerning such a problem

\begin{prop}
\label{profilbarre}
Let $k\geq 1$.  Let $g^-\in \bH^k(\Omega_{-})$, $g^+\in \bH^{k-1}(\Omega_{+})$, $l\in \bH^{k-\frac{1}{2}}(\Sigma)$ and $h\in \bH^{k-\frac{1}{2}}(\Sigma)$. We assume that $h$ satisfies the compatibility condition 
$$\int_\Sigma h \,\dr \sigma=0.$$
We consider the following problem:
\begin{equation}
\label{SBBJ}
 \left\{
   \begin{array}{lll}
    \kappa\Vmb + \nabla\pmob=g^-
 \quad&\mbox{in}\quad \Omega_{-}
\\[0.5ex]
\nabla\cdot\Vmb=0 \quad&\mbox{in}\quad \Omega_{-}\ 
\\[0.5ex]
 -\nabla \cdot \bsigma^+( \Vplus , \Pplus)= g^+
 \quad&\mbox{in}\quad \Omega_{+}
\\[0.5ex]
\nabla\cdot \Vplus= 0\quad&\mbox{in}\quad \Omega_{+}
    \\[0.5ex]
(\Vplus -\Vmb)\cdot \nn= h
\quad &\mbox{on}\quad \Sigma
 \\[0.5ex]
  2\mu\bd(\Vplus)\cdot\nn -\Pplus\nn = -\pmob \nn + \frac{\beta}{2}\left((\Vplus+\Vmb)\cdot \nn\right)\nn + l
 \quad &\mbox{on} \quad \Sigma
\\[0.2ex]
\Vplus=0
  \quad &\mbox{on}\quad \Gamma \ .
   \end{array}
    \right.
\end{equation}
Then \eqref{SBBJ} admits a weak solution unique up to additive constants for $\Pplus$ and $\pmob$.  It satisfies the regularity properties:
$$\Vplus\in \bH^{k+1}(\Omega_{+}), \quad \Vmb\in \bH^{k}(\Omega_{-}), \quad \Pplus\in \H^{k}(\Omega_{+})\quad\mbox{and} \quad \pmob\in \H^{k+1}(\Omega_{-}).$$
\end{prop}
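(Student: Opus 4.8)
The plan is to eliminate the porous unknowns and recast \eqref{SBBJ} as a single Stokes problem in $\Omega_+$ carrying a nonlocal Robin condition on $\Sigma$, following the Dirichlet--to--Neumann strategy announced in the introduction. First I would use the two equations in $\Omega_-$: from $\kappa\Vmb+\nabla\pmob=g^-$ together with $\Div\Vmb=0$ one gets $\Delta\pmob=\Div g^-$ in $\Omega_-$, while the transmission condition $(\Vplus-\Vmb)\cdot\nn=h$ gives $\Vmb\cdot\nn=\Vplus\cdot\nn-h$, whence the Neumann datum $\partial_\nn\pmob=g^-\cdot\nn-\kappa(\Vplus\cdot\nn-h)$ on $\Sigma=\partial\Omega_-$. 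Thus, for a prescribed normal trace $\phi:=\Vplus\cdot\nn$, the porous pressure $\pmob$ solves a pure Neumann problem, uniquely up to an additive constant, and $\Vmb=\kappa^{-1}(g^--\nabla\pmob)$ is then fully determined. The compatibility condition for that Neumann problem reads $\int_{\Omega_-}\Div g^-=\int_\Sigma(g^-\cdot\nn-\kappa\phi+\kappa h)$, that is $\int_\Sigma\phi=\int_\Sigma h$; since $\Div\Vplus=0$ in $\Omega_+$ and $\Vplus=0$ on $\Gamma$ force $\int_\Sigma\Vplus\cdot\nn=0$, this reduces to the assumed $\int_\Sigma h\,\dr\sigma=0$. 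This step defines an affine Neumann--to--Dirichlet map $T:\phi\mapsto\pmob_{|\Sigma}$, whose linear part is $-\kappa\mathcal N$, where by Green's formula $\mathcal N$ is symmetric, nonnegative and smoothing of one order, $\mathcal N:\H^{s}(\Sigma)\to\H^{s+1}(\Sigma)$ on mean-zero functions. This operator is what makes the reduction close.

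Next I would substitute $\Vmb\cdot\nn=\Vplus\cdot\nn-h$ and $\pmob_{|\Sigma}=T(\Vplus\cdot\nn)$ into the stress condition on $\Sigma$: its tangential part becomes the prescribed traction $(2\mu\bd(\Vplus)\cdot\nn)_\tau=l_\tau$, and its normal part a Robin-type relation mixing $\Pplus$, the nonlocal term $\pmob_{|\Sigma}$ and $\Vplus\cdot\nn$. Testing the Stokes equations against divergence-free fields $\uu$ vanishing on $\Gamma$ and integrating by parts (the outward normal of $\Omega_+$ on $\Sigma$ being $-\nn$) produces the bilinear form
\begin{equation*}
a(\Vplus,\uu)=2\mu\int_{\Omega_+}\bd(\Vplus):\bd(\uu)\,\dr\xx+\kappa\int_\Sigma\mathcal N(\Vplus\cdot\nn)\,(\uu\cdot\nn)\,\dr\sigma+\beta\int_\Sigma(\Vplus\cdot\nn)(\uu\cdot\nn)\,\dr\sigma
\end{equation*}
on $V_+=\{\uu\in\bH^1(\Omega_+):\Div\uu=0,\ \uu_{|\Gamma}=0\}$, the data $g^\pm$, $l$, $h$ and the data part of $T$ being gathered into a continuous linear functional. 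Since $\mathcal N$ and the $\beta$-term are nonnegative, the coercivity inequality $\|\bd(\uu)\|_{0,\Omega_+}\geqslant C\|\uu\|_{1,\Omega_+}$ from the proof of Theorem~\ref{thm32} yields coercivity of $a$ on $V_+$, and Lax--Milgram provides a unique $\Vplus\in V_+$. The pressure $\Pplus\in\L^2(\Omega_+)$ is then recovered, up to a constant, from the inf--sup (De Rham) property of the divergence on $\Omega_+$, and finally $\pmob$ (up to a constant) and $\Vmb$ follow from the Neumann solve; this gives existence and the stated uniqueness.

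For the regularity I would run a bootstrap alternating the two elliptic solves. Assuming $\Vplus\in\bH^{s+1}(\Omega_+)$, the trace $\Vplus\cdot\nn\in\H^{s+1/2}(\Sigma)$ feeds the Neumann problem, whose elliptic regularity gives $\pmob\in\H^{\min(s+2,k+1)}(\Omega_-)$ and, through the one-order smoothing of $\mathcal N$, a trace $\pmob_{|\Sigma}\in\H^{\min(s+3/2,k+1/2)}(\Sigma)$ that is never the limiting term in the Stokes traction datum. The resulting mixed Stokes problem in $\Omega_+$ (tangential traction on $\Sigma$, nonlocal normal Robin condition, Dirichlet on $\Gamma$) then has force $g^+\in\bH^{k-1}(\Omega_+)$ and boundary datum of regularity $\H^{\min(s+1/2,k-1/2)}(\Sigma)$, so its elliptic shift raises the velocity to $\bH^{\min(s+2,k+1)}(\Omega_+)$ and the pressure by one order. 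Iterating, the index $s$ increases by one per cycle until it saturates at the data-limited level, yielding $\Vplus\in\bH^{k+1}(\Omega_+)$, $\Pplus\in\H^{k}(\Omega_+)$, and then $\pmob\in\H^{k+1}(\Omega_-)$ with $\Vmb=\kappa^{-1}(g^--\nabla\pmob)\in\bH^{k}(\Omega_-)$.

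I expect the main obstacle to be the elliptic regularity for the Stokes operator under these nonstandard mixed conditions on $\Sigma$ --- a tangential traction condition combined with a nonlocal normal Robin condition built from the Neumann--to--Dirichlet operator --- and checking that the shift theorem applies uniformly at every cycle so that no derivative is lost in the coupling; the smoothing property $\mathcal N:\H^{s}\to\H^{s+1}$ is exactly what guarantees closure of the bootstrap. The existence part, by contrast, is routine once coercivity of $a$ is in place.
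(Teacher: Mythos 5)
Your proposal is correct and follows essentially the same strategy as the paper: eliminate the Darcy unknowns through a (Neumann-to-Dirichlet type) operator sending $\Vmb\cdot\nn$ to the trace of $\pmob$, reduce to a Stokes problem in $\Omega_{+}$ with a nonlocal stress condition on $\Sigma$, and bootstrap regularity by alternating the one-order smoothing of that operator with elliptic regularity for the Stokes system under normal stress boundary conditions (the ingredient you flag as the main obstacle, which the paper imports from Boyer--Fabrie as Proposition~\ref{prop-BFab}). The only differences are organizational: the paper establishes existence via a coupled variational formulation on all of $\Omega$ (the space $W$) and reduces $h\neq0$ to $h=0$ by a divergence-free lifting of $h$ into $\Omega_{-}$, whereas you work directly with the reduced form on $\Omega_{+}$ and absorb $h$ into the affine part of the nonlocal map.
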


This proposition will be proved in Section \ref{secregprof}.

\vspace{2mm}
Using Proposition \ref{profilbarre} with $g^\pm=\sg^\pm$, $l=\sl$ and $h=0$, using the regularity of the data \eqref{regdata}, we obtain the existence and uniqueness (up to an additive constant for the pressures) of the asymptotics $\Vplus_0\in \bH^{\bk+1}(\Omega_{+}), \quad \Vmb_0\in \bH^{\bk}(\Omega_{-}), \quad \Pplus_0\in \H^{\bk}(\Omega_{+})\quad\mbox{and} \quad \pmob_0\in \H^{\bk+1}(\Omega_{-}).$

\subsubsection{Determination of $(\Vmt_{0})_\tau$}
The tangential component of  \eqref{fv0} reduce to 
\begin{equation*}
-\dzz(\Vmt_{0})_\tau  + \kappa (\Vmt_{0})_\tau =0 \quad \mbox{in} \quad \Omega_{-}\times \R^+  \ .
\end{equation*}
 Hence, since $\fv_{0}\to 0$ when $\sz\to \infty$ , we obtain
$$
\fv_{0}( x, \sz)_\tau=\fv_{0}(x,0)_\tau \exp( -\sqrt {\kappa}\,\sz )  \quad \mbox{in} \quad \Omega_{-}\times \R^+\ ,
$$
According to \eqref{sv0tz=0}, we infer
$$
\fv_{0}( x, 0)_\tau= (\Vplus_{0} -\Vmb_{0})_\tau -\frac{2\mu}{\alpha} (\bd( \Vplus_0)\cdot \nn)_\tau +\frac{1}{\alpha} \left(\sh+\frac 1 2 \sl\right)_\tau \quad \mbox{on}\quad \Sigma\ .
$$ 
Hereafter, we introduce $\sw_{0}^0$ which is a tangential extension of $(\Vplus_{0} -\Vmb_{0})_\tau -\frac{2\mu}{\alpha} (\bd( \Vplus_0)\cdot \nn)_\tau +\frac{1}{\alpha} \left(\sh+\frac 1 2 \sl\right)_\tau$ in the domain $\Omega_{-}$. We choose this extension $\sw_{0}^0$ such that it has a support in a tubular neighborhood of the interface $\Sigma$.  Since $ \Vplus_{0} \in \bH^{\sk+1}(\Omega_{+})$, since $\Vmb_{0} \in \bH^{\sk }(\Omega_{-})$, since $\sl$ and $\sh$ belong to the space $\bH^{\sk-\frac 1 2 }(\Sigma)$, we can take this extension satisfying    $\sw_{0}^0\in \bH^{\sk }(\Omega_{-})$, and using \eqref{Efp0} we obtain
\begin{equation}
\label{fv0-}
\fv_{0}( x, \sz)=\sw_{0}^0(x) \exp( -\sqrt {\kappa}\,\sz )  \quad \mbox{in} \quad \Omega_{-}\times \R^+\ ,
\end{equation}
and then $\fv_{0}$ is completely defined by \eqref{fv0-}.


\subsection{Equations satisfied by asymptotics of order 1}
\label{sec-as1}
\subsubsection{Determination of $ \fp_{1}$}

According to \eqref{fv0n}, by taking the normal components in \eqref{fv0}, we obtain $\fp_{1,Z}=0$. Hence, since $\fp_{1}\to 0$ when $\sz\to \infty$, we obtain
\begin{equation}
\label{fp1}
\fp_{1}=0  \quad \mbox{in} \quad \Omega_{-}\times \R^+ 
\ .
\end{equation}

\subsubsection{Determination of $ \Vmt_{1}\cdot \nn$}

 According to \eqref{divfvj}
$$
\dz \fv_{1}\cdot \nabla d=- \nabla\cdot \fv_{0} =- (\nabla \cdot \sw_0^0) \exp( -\sqrt {\kappa}\,\sz )
\quad \mbox{in} \quad \Omega_{-}\times \R^+\ .
$$
Hence, since $\Vmt_1$ tends to zero when $\sz$ tends to $+\infty$,
\begin{equation}
\label{fv1n}\dsp \Vmt_{1}\cdot\nn= \nu_1^0 \exp( -\sqrt {\kappa}\,\sz)\quad \mbox{with}\quad
\nu_1^0=\frac{-1}{\sqrt\kappa }(\nabla\cdot \sw_0^0)\in \bH^{\bk- 1  }(\Omega_{-}).\end{equation}



\subsubsection{Determination of $(\Vmb_{1},\Vplus_{1})$, and $(\pmob_{1},\Pplus_{1})$}

According to \eqref{fv0n} and \eqref{fp1}, the transmission condition \eqref{dnsv1t} writes :
\begin{equation*}
2 \mu\bd(\Vplus_{1})\cdot\nn -\Pplus_{1}\nn =  -\frac12 \dz \Vmt_0 -\pmob_{1}\nn +\frac{\beta}{2} \left((\Vplus_{1} +\Vmb_{1} +\Vmt_1  )\cdot \nn\right)\nn  \quad \mbox{on} \quad \Sigma
\end{equation*}

Hence, according to \eqref{sv1b}, \eqref{divvbj}, and to \eqref{sv+j} and \eqref{divsv+j} when $j=1$, and according to  \eqref{dnsv1t}, \eqref{sv1n}, \eqref{fv0-} and \eqref{fv1n},  $(\Vmb_{1},\Vplus_{1})$, and $(\pmob_{1},\Pplus_{1})$ satisfy the transmission problem:
\begin{equation}
\label{SBBJ1}
 \left\{
   \begin{array}{lll}
    \kappa\Vmb_{1} + \nabla\pmob_{1}=0
 \quad&\mbox{in}\quad \Omega_{-}
\\[0.5ex]
\nabla\cdot\Vmb_{1}=0 \quad&\mbox{in}\quad \Omega_{-}\ 
\\[0.5ex]
 - \nabla \cdot \bsigma^+( \Vplus_{1} , \Pplus_{1})=0 
 \quad&\mbox{in}\quad \Omega_{+}
\\[0.5ex]
\nabla\cdot \Vplus_{1}= 0\quad&\mbox{in}\quad \Omega_{+}
 \\[0.5ex]
(\Vplus_{1} -\Vmb_{1})\cdot \nn =\nu_1^0           \quad &\mbox{on}\quad \Sigma
 \\[0.5ex]
  2\mu\bd(\Vplus_{1})\cdot\nn -\Pplus_{1}\nn =   -\pmob_{1}\nn +\frac{\beta}{2} \left((\Vplus_{1} +\Vmb_{1}) \cdot \nn\right)\nn  + \frac12 \sqrt{\kappa}\, \sw_0^0+\frac{\beta}{2} \nu_1^0 \nn
 \quad &\mbox{on} \quad \Sigma
\\[0.2ex]
\Vplus_{1}=0
  \quad &\mbox{on}\quad \Gamma \ .
   \end{array}
    \right.
\end{equation}

We apply Proposition \ref{profilbarre} with 
$$g^\pm=0, \quad l=\frac12 \sqrt{\kappa}\, \sw_0^0 + \frac \beta 2 \nu_1^0 \,\nn\in  \bH^{\bk- \frac 3 2} (\Sigma) , \quad \mbox{and}   \quad   h=\nu_1^0\in \H^{\sk-\frac 3 2}(\Sigma)\ ,
$$
 and $\bk\geqslant 2$. 
 We remark that $\nu_1^0$ satisfies the compatibility condition  $\int_{\Sigma} \nu_1^0 \dr \sigma=0$ since $\nu_1^0$ is the divergence of a tangential vector field. Indeed,  the divergence of $\sw_0^0$ on the surface $\Sigma$ is the divergence operator on $\Sigma$ applied to a tangent vector field on $\Sigma$;  hence by the Stokes formula,  the integral of $\nabla\cdot \sw_0^0$ vanishes since $\Sigma$ does not have boundary.

Hence we obtain the existence and uniqueness (up to an additive constant for the pressures) for the asymptotics $\Vmb_{1}$, $\Vplus_{1}$, $\pmob_{1}$ and $\Pplus_{1}$ which satisfy:
\begin{equation}
\Vplus_1\in \bH^{\sk}(\Omega_{+}), \quad \Vmb_1\in \bH^{\sk-1}(\Omega_{-}), \quad \Pplus_1\in \H^{\sk-1}(\Omega_{+})\quad\mbox{and} \quad \pmob_1\in \H^{\sk}(\Omega_{-}) \ .
\end{equation}

\subsubsection{Determination of $(\fv_{1})_\tau$}

 Using $\fp_{1}=0 $, then \eqref{fv1} writes now

\begin{equation}
\label{FV1}
-\dzz \fv_{1} + \kappa \fv_{1}   +\nabla d \ \dz \fp_{2}=-\sqrt{\kappa} \, ( 2 (\nabla d \cdot  \nabla)  \sw_0^0 + \Delta d \ \sw_0^0)\exp(-\sqrt{\kappa}\, \sz) \ , \quad \mbox{in} \quad \Omega_{-}\times \R^+\ .
\end{equation}

Taking the tangential components of the previous equation, we obtain that $\fv_{1}( x,\sz )_\tau$ solves 
\begin{equation}
\label{fv1t}
-\dzz(\fv_{1})_\tau + \kappa (\fv_{1})_\tau=-\sqrt{\kappa} \, ( 2 (\nabla d \cdot  \nabla)  \sw_0^0 + \Delta d \ \sw_0^0)_\tau\exp(-\sqrt{\kappa}\, \sz)\quad \mbox{in} \quad \Omega_{-}\times \R^+
\ ,
\end{equation}
with (see \eqref{svjtz=0}) : 
\begin{equation}
\label{CBfv1t}
 (\fv_{1})_\tau(x,0)
=
(\Vplus_{1} -\Vmb_{1})_\tau
 - \frac{2\mu}{\alpha} \left(\bd(\Vplus_{1})\cdot \nn\right)_\tau
 \quad \mbox{on}\quad \Sigma.
 \end{equation}

On the one hand, we introduce $\sw_1^0$ which is a tangential extension of $\dsp (\Vplus_{1} -\Vmb_{1})_\tau
 -\frac{2\mu}{\alpha} \left(\bd(\Vplus_{1})\cdot \nn\right)_\tau
$ in the domain  $\Omega_{-}$. We choose this extension $\sw_{1}^0$ such that it has a support in a tubular neighborhood of the interface $\Sigma$. Since $\Vplus_1\in \bH^{\sk}(\Omega_{+})$ and $\Vmb_1\in \bH^{\sk-1}(\Omega_{-})$, we can take an extension satisfying
 $$
 \sw_1^0\in \bH^{\sk-1}(\Omega_{-})\ .
 $$
  On the other hand, we  denote
  $$\sw_1^1=-\frac{1}{2}\left( 2\nabla d\cdot  \nabla \sw^0_0  + \Delta d\  \sw^0_0  \right)_\tau  \exp( -\sqrt {\kappa}\sz ).$$
 We remark that $\sw_1^1\in  \bH^{\sk-1}(\Omega_{-})$ since $\sw_0^0\in  \bH^{\sk}(\Omega_{-})$.
 
By solving (\ref{fv1t}) with the boundary condition (\ref{CBfv1t}) and we obtain that
\begin{equation}
\label{Efv1t}
(\fv_1)_\tau(x,\sz)= \left(\sw_1^0(x)+\sz\,\sw_1^1(x)\right) \exp ( -\sqrt {\kappa}\sz )
\quad \mbox{with} \quad \sw_1^0 \ \ , \sw_1^1 \quad\mbox{in} \quad \bH^{\sk-1}(\Omega_{-}) .
\end{equation}


\subsection{Characterization of the asymptotics of order $j \geq 2$.}

First let us write the equations satisfied by the asymptotics at order $j$.

At order $2$, from  the normal components of \eqref{fv1}, using \eqref{fv0-}, we have 
$$(-\dzz \Vmt_1 + \kappa \Vmt_1)\cdot \nn -\dz \fp_{2}= -\sqrt{\kappa} \, ( 2 (\nabla d \cdot  \nabla)  \sw_0^0 + \Delta d \ \sw_0^0)\cdot \nn \, \exp(-\sqrt{\kappa}\, \sz).
$$
Hence, according to \eqref{fv1n}, $ \fp_{2}$ satisfies 
$$
\dz \fp_{2}   =    \sqrt{\kappa} \, ( 2 (\nabla d \cdot  \nabla)  \sw_0^0) \cdot \nn  \exp(-\sqrt{\kappa}\, \sz)  \ , \quad \mbox{in} \quad \Omega_{-}\times \R^+\ .
$$
We infer 
\begin{equation}
\label{fp2}
 \fp_{2}=\sq_2^0  \exp(-\sqrt{\kappa}\, \sz)\quad \mbox{with}\quad \sq_2^0=             - ( 2 (\nabla d \cdot  \nabla)  \sw_0^0) \cdot \nn \in \bH^{\sk-1}(\Omega_{-}).
\end{equation}

At order $j\geq 3$, from the normal part of \eqref{svjt}, replacing $j$ by $j-1$,  we obtain that 
$$\dz \fp_{j}=\left(-\dzz \Vmt_{j-1}+\kappa \Vmt_{j-1}+\nabla\fp_{j-1}-2\nabla d\cdot \nabla \dz \Vmt_{j-2}-\Delta d \dz\Vmt_{j-2}-\Delta \Vmt_{j-3}\right)\cdot \nn,$$ 
so that
\begin{multline}
\label{ptmj}
\fp_{j}(x,z)=
\\
-\int_\sz^{+\infty}\left(-\dzz \Vmt_{j-1}+\kappa \Vmt_{j-1}+\nabla\fp_{j-1}-2\nabla d\cdot \nabla \dz \Vmt_{j-2}-\Delta d \dz\Vmt_{j-2}-\Delta \Vmt_{j-3}\right)\cdot \nn\, d\sz.
\end{multline}

From \eqref{divfvj}, there holds
$$ \dz \Vmt_j \cdot \nn = \nabla \cdot \Vmt_{j-1}\quad \mbox{in} \quad \Omega_{-}\times \R^+\ ,$$
hence
\begin{equation}
\label{defVmtjn}
\Vmt_j \cdot \nn(x,\sz)=-\int_\sz^{+\infty} \nabla \cdot \Vmt_{j-1} (.\ ,\xi)\,d\xi \ .
\end{equation}

From \eqref{svjb}, \eqref{sv+j}, \eqref{divvbj}, \eqref{divsv+j}, \eqref{dnsvjt}, \eqref{v+j} and \eqref{svjn}, we obtain the following system characterizing $\Vplus_j$, $\Pplus_j$, $\Vmb_j$ and $\pmob_j$:
\begin{equation}
\label{Vjbarre}
\left\{\begin{array}{ll}
\dsp  \kappa \Vmb_{j}  +\nabla\pmob_{j}  = \Delta\Vmb_{{j-2}} \quad &\mbox{in} \quad \Omega_{-}\ 
\\[0.5ex]
\nabla\cdot\Vmb_{j}=0 \quad&\mbox{in}\quad \Omega_{-}\ 
\\[0.5ex]
 - \nabla \cdot \bsigma^+(\Vplus_{j}, \Pplus_{j})=0 
 \quad&\mbox{in}\quad \Omega_{+}
\\[0.5ex]
\nabla\cdot \Vplus_{j}= 0\quad&\mbox{in}\quad \Omega_{+}
    \\[0.5ex]
(\Vplus_{j} -\Vmb_{j})\cdot \nn =h_j           \quad &\mbox{on}\quad \Sigma
 \\[0.5ex]
 2\mu\bd(\Vplus_{j})\cdot\nn - \Pplus_{j}\nn = -\pmob_{j}\nn +\frac{\beta}{2} \left((\Vplus_{j} +\Vmb_{j}   ) \cdot \nn \right)\nn +l_j \quad &\mbox{on} \quad \Sigma
\\[0.2ex]
\Vplus_{j}=0
  \quad &\mbox{on}\quad \Gamma \ ,
   \end{array}
    \right.
\end{equation}
where the data are given by:
$$
\begin{array}{l}
\dsp l_j=-\left(\dz \Vmt_{j-1}(\cdot,0)+ (\dz\Vmt_{j-1}\cdot \nn)\nn \right)+2( \bd(\Vmb_{j-2})\cdot \nn+\bd( \Vmt_{j-2})(\cdot,0)\cdot \nn)+\left(\frac{\beta}{2}\Vmt_j(\cdot , 0)\cdot \nn -\fp_j(\cdot , 0)\right)\nn,
\\[1ex]
\dsp h_2=\Vmt_2\cdot \nn(\cdot,0) +  2 \mu\left(\bd(\Vplus_{0})\cdot\nn\right)\cdot \nn -\Pplus_{0} -\frac \beta 4 \left (\Vplus_{0} +\Vmb_{0}+\Vmt_{0}\right)\cdot \nn -(\sh+ \frac 1 2 \sl)\cdot \nn,
\\[1ex]
\dsp h_j= \Vmt_j\cdot \nn(\cdot,0) +2\mu\left(\bd(\Vplus_{j-2})\cdot\nn\right)\cdot\nn -\Pplus_{j-2}-\fp_{j-2} -\frac{ \beta}{4}\left (\Vplus_{j-2} +\Vmb_{j-2}+ \Vmt_{j-2}\right)\cdot \nn \quad \mbox{for}\quad
j\geq 3 .
\end{array}
$$

Taking the tangential part of  \eqref{svjt} and writing \eqref{svjtz=0}, we obtain that $(\Vmt_j)_\tau$ satisfies
\begin{equation}
\label{detsvjt}
\left\{
\begin{array}{l}
-\dzz(\Vmt_{j})_\tau +\kappa (\Vmt_{j})_\tau= \left( -\nabla\fp_j + 2(\nabla d \cdot \nabla) \dz\Vmt_{j-1} + \Delta d \dz \Vmt_{j-1} + \Delta \Vmt_{j-2}\right)_\tau\quad \mbox{ for }\sz\geq 0,
\\\\[1ex]
\dsp   (\Vmt_{j})_\tau(\cdot,0)= (\Vplus_j-\Vmb_j)_\tau - \frac{2\mu}{\alpha} (\bd (\Vplus_j)\cdot \nn )_\tau.
  \end{array}
\right.
\end{equation}

\subsection{Existence and regularity of the asymptotics} 

\begin{prop}
\label{existence-profils}
Let $\bk\geqslant 1$. We assume that the data in \eqref{Das1}-\eqref{Das9} satisfy:
\begin{equation*}
\sg^-\in \bH^\bk(\Omega_{-}),\quad   \sg^+\in \bH^{\bk-1}(\Omega_{+}),\quad  \sl\in \bH^{\bk-\frac{1}{2}}(\Sigma)\quad\mbox{and}\quad\sh\in \bH^{\bk-\frac{1}{2}}(\Sigma).
\end{equation*}
Then, for all $j\in \{0, \ldots, \sk-1\}$, there exists asymptotics $\Vplus_j$, $\Vmb_j$, $\Vmt_j$, $\Pplus_j$, $\pmob_j$ and $\fp_j$, satisfying 
\begin{itemize}
\item
\eqref{Efp0},  \eqref{SBBJ0} and \eqref{fv0-}, when $j=0$,
\item
\eqref{fp1}, \eqref{fv1n}, \eqref{SBBJ1}, and \eqref{Efv1t}, when $j=1$, 
\item
\eqref{fp2}, \eqref{ptmj}, \eqref{defVmtjn}, \eqref{Vjbarre}
and \eqref{detsvjt}, when $j\geqslant 2$. 
\end{itemize}
There holds 
$$
\Vplus_j\in \bH^{\sk-j+1}(\Omega_{+}), \quad \Vmb_j\in \bH^{\sk-j}(\Omega_{-}), \quad \Pplus_j\in \H^{\sk-j}(\Omega_{+})\quad\mbox{and} \quad \pmob_j\in \H^{\sk-j+1}(\Omega_{-}).
$$
In addition, the boundary layer terms $\Vmt_j$ are on the following form: $\Vmt_0\cdot \nn =0$ and
$$\begin{array}{l}
\dsp \Vmt_j\cdot \nn (x,\sz)=\sum_{l=0}^{j-1} \nu_j^l(x)\sz^l  \exp(-\sqrt \kappa\, \sz)\quad \mbox{ with }\quad \nu_j^l\in \H^{\sk-j}(\Omega_{-}),
\\[1ex]
\dsp (\Vmt_j)_\tau(x,\sz)=\sum_{l=0}^{j} \sw_j^l(x)\sz^l  \exp(-\sqrt \kappa\, \sz)\quad \mbox{ with }\quad \sw_j^l\in \bH^{\sk-j}(\Omega_{-}).
\end{array}$$
Concerning the boundary layer terms for the pressure, $\fp_0=\fp_1=0$ and for $j\geq 2$, 
$$ \fp_{j}(x,\sz)=\sum_{l=0}^{j-2} \sq_{j}^l(x)\sz^l  \exp(-\sqrt \kappa\, \sz)\quad \mbox{ with }\quad \sq_j^l\in \bH^{\sk-j+1}(\Omega_{-})$$
\end{prop}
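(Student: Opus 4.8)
The plan is to proceed by strong induction on $j\in\{0,\dots,\sk-1\}$. The upper limit $\sk-1$ is forced by Proposition~\ref{profilbarre}, which requires $k\geqslant1$ and which I shall apply at order $j$ with $k=\sk-j$. The base cases $j=0$ and $j=1$ are nothing but the explicit computations of Sections~\ref{sec-as0} and~\ref{sec-as1}, producing \eqref{Efp0}, \eqref{SBBJ0}, \eqref{fv0-} and \eqref{fp1}, \eqref{fv1n}, \eqref{SBBJ1}, \eqref{Efv1t} with the announced regularity. Fixing $j\geqslant2$, I assume that every profile of order $\leqslant j-1$ has been built, lies in the stated Sobolev spaces, and has the form $\sum_l(\cdot)\sz^l\exp(-\sqrt\kappa\,\sz)$.

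Inside the induction step the profiles must be produced in a fixed order, so that each one depends only on already constructed quantities: first $\fp_j$ from \eqref{fp2} or \eqref{ptmj}; then $\Vmt_j\cdot\nn$ from \eqref{defVmtjn}; then the quadruple $(\Vmb_j,\Vplus_j,\pmob_j,\Pplus_j)$ by solving the transmission problem \eqref{Vjbarre} with Proposition~\ref{profilbarre}; and finally $(\Vmt_j)_\tau$ by integrating the ordinary differential equation \eqref{detsvjt} in $\sz$. The polynomial structure then propagates for free: the quadratures \eqref{ptmj} and \eqref{defVmtjn} preserve the degree in $\sz$, while the resonance of $\exp(-\sqrt\kappa\,\sz)$ with the operator $-\dzz+\kappa$ in \eqref{detsvjt} raises it by one. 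This yields degree $j-2$ for $\fp_j$, degree $j-1$ for $\Vmt_j\cdot\nn$ and degree $j$ for $(\Vmt_j)_\tau$, the lowest coefficient $\sw_j^0$ being fixed by the trace condition in \eqref{detsvjt}.

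The Sobolev bookkeeping is routine but must be checked term by term. Applying $\Delta$, $\nabla$, $\bd$ or $\nabla\cdot$ to order-$(j-1)$ and order-$(j-2)$ profiles costs at most two derivatives, and each resulting coefficient is then controlled in the spaces announced for order $j$. The binding object is $\Vmt_j\cdot\nn$: its coefficients $\nu_j^l$ inherit only $\H^{\sk-j}(\Omega_{-})$ from $\nabla\cdot\Vmt_{j-1}$, so their traces, and hence the whole interface data $h_j$ and $l_j$, sit exactly in $\bH^{\sk-j-\frac12}(\Sigma)$. Together with $g^-=\Delta\Vmb_{j-2}\in\bH^{\sk-j}(\Omega_{-})$ and $g^+=0$, this is precisely what Proposition~\ref{profilbarre} needs with $k=\sk-j$; it returns $\Vplus_j\in\bH^{\sk-j+1}(\Omega_{+})$, $\Vmb_j\in\bH^{\sk-j}(\Omega_{-})$, $\Pplus_j\in\H^{\sk-j}(\Omega_{+})$ and $\pmob_j\in\H^{\sk-j+1}(\Omega_{-})$, after which the tangential extension and the ODE \eqref{detsvjt} place every $\sw_j^l$ in $\bH^{\sk-j}(\Omega_{-})$ and every $\sq_j^l$ in $\bH^{\sk-j+1}(\Omega_{-})$.

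The one hypothesis of Proposition~\ref{profilbarre} that cannot be read off mechanically, and which I expect to be the main obstacle, is the compatibility condition $\int_\Sigma h_j\,\dr\sigma=0$. I would fold its verification into the induction, imitating the case $j=1$, where $\nu_1^0$ was recognised as the surface divergence of a tangent field and annihilated by the Stokes formula on the boundaryless surface $\Sigma$. For general $j$ the contribution $\Vmt_j\cdot\nn(\cdot,0)$ coming from \eqref{defVmtjn} is again, through $\nabla\cdot\Vmt_{j-1}$, an integral of a divergence and integrates to zero by the same argument; the remaining terms of $h_j$ (built from $\Vplus_{j-2}$, $\Vmb_{j-2}$, $\Pplus_{j-2}$, $\fp_{j-2}$, and, when $j=2$, from $\sh$ and $\sl$) must be shown to cancel after integration over $\Sigma$, using the divergence-free identities $\int_\Sigma\Vplus_m\cdot\nn\,\dr\sigma=\int_\Sigma\Vmb_m\cdot\nn\,\dr\sigma=0$ --- consequences of \eqref{divvbj}, \eqref{divsv+j}, \eqref{v+j} and the divergence theorem --- together with the normal trace of the order-$(j-2)$ stress condition. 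Establishing this cancellation is the delicate point and the only place where the global structure of the expansion, rather than term-by-term estimation, is genuinely needed.
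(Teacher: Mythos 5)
Your construction follows the paper's proof almost exactly: the same strong induction, the same ordering of the four sub-steps inside the induction step ($\fp_j$, then $\Vmt_j\cdot\nn$, then the quadruple $(\Vmb_j,\Vplus_j,\pmob_j,\Pplus_j)$ via Proposition~\ref{profilbarre} with $k=\sk-j$, then $(\Vmt_j)_\tau$), and the same two elementary facts about quadratures and the resonant ODE $-\dzz f+\kappa f=(\text{polynomial})\exp(-\sqrt\kappa\,\sz)$ that the paper isolates as Lemmas~\ref{L1} and~\ref{lem-2}. The Sobolev bookkeeping you describe is also the paper's.

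There is, however, a genuine gap at precisely the point you flag as ``the delicate point'': the compatibility condition $\int_\Sigma h_j\,\dr\sigma=0$. You leave it unproven and propose to establish it as an identity, by showing that every term of $h_j$ integrates to zero over $\Sigma$. That route would fail. First, for $j\geqslant 3$ the term $\Vmt_j\cdot\nn(\cdot,0)=-\int_0^{+\infty}\nabla\cdot\Vmt_{j-1}(\cdot,\xi)\,\dr\xi$ involves the full three-dimensional divergence of a field with a nonzero normal component, so the Stokes-formula argument that worked for $\nu_1^0$ (where $\Vmt_0$ is purely tangential) does not apply verbatim. Second, and more fundamentally, $h_j$ contains the term $-\Pplus_{j-2}$, and there is no reason for $\int_\Sigma \Pplus_{j-2}\,\dr\sigma$ to take the exact value needed for cancellation: the condition is simply not an identity. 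The paper's resolution is a one-line observation that you miss: $\Pplus_{j-2}$ is determined only up to an additive constant, and adding the constant $c$ shifts $\int_\Sigma h_j\,\dr\sigma$ by $-c\,|\Sigma|$; one therefore \emph{chooses} the free constant at step $j$ so as to enforce $\int_\Sigma h_j\,\dr\sigma=0$. Without this normalization your induction step cannot invoke Proposition~\ref{profilbarre}, so the argument as written does not close.
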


\begin{proof}
From Sections \ref{sec-as0} and \ref{sec-as1}, the property is true for $j=0$ and $j=1$. 

Let us assume that the property is true up to order $j-1$, with $j\leqslant \sk-1$. We claim without proof the following lemma:

\begin{lem}
\label{L1}
 For all $l\in \N$,
 $$\int_\sz^{+\infty}\xi^l\exp(-\sqrt {\kappa}\,\xi)d \,\xi=\left(\sum_{j=0}^l \frac{l!}{j!}\frac {1}{\kappa^{\frac{l-j+1}{2}}}\sz^j\right)\exp(-\sqrt \kappa \, \sz).$$ 
\end{lem}

\vspace{2mm}

{\it First step: construction of $\fp_j$.}

\vspace{2mm}

Concerning $\fp_j$, if $j=2$, we have Property \eqref{fp2}. If $j>2$, then from \eqref{ptmj} and with the induction property at order $j-1$ and $j-2$, $\dz \fp_j$ writes:
\begin{equation}
\label{Efpj}
\dz \fp_j(x,\sz)=\left( \sum_{l=0}^{j-2} \psi_l(x)\sz^l\right)\exp(-\sqrt \kappa \, \sz)
\end{equation}
where the terms $\psi_l$ are linear combinations of the terms $\nu_{j-1}^s$, $\nabla \sq^s_{j-1}$, $\nu_{j-2}^s$, $\sw_{j-2}^s$, $\nabla \nu_{j-2}^s$, $\nabla \sw_{j-2}^s$, $\Delta \nu_{j-3}^s$, and $\Delta \sw_{j-3}^s$. In particular, all these terms belong to the space $\bH^{k-j+1}(\Omega_{-})$. Hence, by using the previous lemma, we obtain by \eqref{Efpj} that $\fp_j$ writes:
$$ \fp_{j}(x,\sz)=\sum_{l=0}^{j-2} \sq_{j}^l(x)\sz^l  \exp(-\sqrt \kappa\, \sz)\quad \mbox{ with }\quad \sq_j^l\in \bH^{\sk-j+1}(\Omega_{-})\ .$$ 

\vspace{2mm}

{\it Second step: construction of $\Vmt_j\cdot \nn$.}

\vspace{2mm}

From \eqref{defVmtjn}, with the induction property at order $j-1$, we remark that $\dz \Vmt_j\cdot \nn$ writes
$$\dz \Vmt_j\cdot \nn=\left(\sum_{l=0}^{j-1}\phi_l(x)\sz^l\right)\exp(-\sqrt \kappa \, \sz)$$
where the terms $\phi_l$ are linear combinations of the $\nabla \cdot \nu_{j-1}^s$ and the $\nabla \cdot \sw_{j-1}^s$, so that the $\phi_i$ are in $\H ^{k-j}(\Omega_{-})$. Using the Lemma \ref{L1}, we obtain that $\Vmt_j\cdot \nn$ writes:
$$ \Vmt_j\cdot \nn (x,\sz)=\sum_{l=0}^{j-1} \nu_j^l(x)\sz^l  \exp(-\sqrt \kappa\, \sz)\quad \mbox{ with }\quad \nu_j^l\in \H^{\sk-j}(\Omega_{-}).$$

\vspace{2mm}

{\it Third step: construction of the terms $\Vplus_j$, $\Pplus_j$, $\Vmb_j$ and $\pmob_j$.}

\vspace{2mm}

The terms $\Vplus_j$, $\Pplus_j$, $\Vmb_j$ and $\pmob_j$ satisfy \eqref{Vjbarre}, and the data satisfy
$$\Delta \Vmb_{j-2}\in \bH^{\sk-j}(\Omega_{-}),\quad
l_j\in \bH^{\sk-j-\frac{1}{2}}(\Sigma), \quad h_j\in \H^{\sk-j-\frac{1}{2}}(\Sigma).$$
We remark that we can add an arbitrary constant to $\Pplus_{j-2}$ in order to obtain the compatibility condition:
$$\int_\Sigma h_j d\, \sigma=0.$$
Hence, we can apply Proposition \ref{profilbarre}  with $k=  \sk-j\geqslant 1$, $g^-=\Delta\Vmb_{j-2}$, $g^+=0$, $l=l_j$, $h=h_j$.  Therefore we obtain the existence and the uniqueness (up to an additive constant for the pressures) of the solution of \eqref{Vjbarre} and the solution satisfies:

$$
\Vplus_j\in \bH^{\sk-j+1}(\Omega_{+}), \quad \Vmb_j\in \bH^{\sk-j}(\Omega_{-}), \quad \Pplus_j\in \H^{\sk-j}(\Omega_{+})\quad\mbox{and} \quad \pmob_j\in \H^{\sk-j+1}(\Omega_{-}).
$$

\vspace{2mm}

{\it Fourth step: construction of $(\Vmt_j)_\tau$.}

\vspace{2mm}

On the interface $\Sigma$, from the previous results, $\dsp \alpha(\Vplus_j-\Vmb_j)_\tau -2 \mu (\partial_\nn \Vplus_j)_\tau \in \bH^{\sk-j-\frac 1 2}(\Sigma)$. We introduce $\sw_j^0\in \bH^{\sk-j}(\Omega_{+})$ a tangential extension of this boundary data. We choose this extension $\sw_{j}^0$ such that it has a support in a tubular neighborhood of the interface $\Sigma$.

\vspace{1mm}

We have now the following lemma:

\begin{lem} 
\label{lem-2}
Let $K\geqslant 0$. The solution of the ODE
$$\left\{
\begin{array}{l}
\dsp -\dzz f + \kappa f = \left(\sum_{i=0}^K \gamma_i \sz^i\right)\exp(-\sqrt \kappa \sz)
\quad \mbox{for all} \quad \sz\geqslant 0\ 
\\
f(0)=f_0\\ \\
f(\sz)\ds 0 \quad \mbox{when} \quad \sz\ds +\infty,
\end{array}
\right.$$
is $\dsp f(z)=\left( f_0 + \sum_{i=0}^K\beta_i \sz^{i+1}\right)\exp(-\sqrt \kappa \sz)$. Here,  
$$\left(\begin{array}{c}
\gamma_0\\
\cdot\\
\cdot\\
\gamma_K
\end{array}\right)=
M_K
\left(\begin{array}{c}
\beta_0\\
\cdot\\
\cdot\\
\beta_K
\end{array}\right)
$$
where $M_K$ is a $(K+1)\times(K+1)$ nonsingular matrix with entries $m_{ij}$ such that
$m_{ii}=2i\sqrt \kappa$, $m_{i, i+1}=-i(i+1)$, and with vanishing other entries.
\end{lem}

From \eqref{detsvjt}, using the induction hypothesis on $\fp_{j}$, $\Vmt_{j-1}$ and on $\Vmt_{j-2}$, $(\Vmt_j)_\tau$ satisfies the system:

$$
\left\{
\begin{array}{l}
-\dzz (\Vmt_j)_\tau(x,\sz)+\kappa (\Vmt_j)_\tau(x,\sz)=\di\left(\sum_{i=0}^{j-1} \gamma_i (x)\sz^i\right)\exp(-\sqrt \kappa \sz), 
\quad \mbox{for}\quad  x\in \Omega_{-},\; \sz\geq 0,\\
\\
(\Vmt_j)_\tau(x,0)=\sw_j^0(x) \quad \mbox{for}\quad x \in \Omega_{-},
\end{array}\right.$$
where $\gamma_i\in \bH^{\sk-j}(\Omega_{-})$ and $\sw_j^0\in \bH^{\sk-j}(\Omega_{-})$. Hence, by applying Lemma \ref{lem-2}, we obtain that 
$$\dsp (\Vmt_j)_\tau(x,\sz)=\sum_{l=0}^{j} \sw_j^l(x)\sz^l  \exp(-\sqrt \kappa\, \sz)\quad \mbox{ with }\quad \sw_j^l\in \bH^{\sk-j}(\Omega_{-}).$$
This concludes the proof of the property at order $j$.

 \end{proof}



\section{Regularity proof for the asymptotics}
\label{secregprof}

Following the previous section, we want to solve a collection of elementary problems satisfying :   Find $\sv=(\Vmb,\Vplus)$, and $\sp=(\pmob,\Pplus)$ such that 
\begin{equation}
\label{SBBJj}
 \left\{
   \begin{array}{lll}
    \kappa\Vmb + \nabla\pmob=g^-
 \quad&\mbox{in}\quad \Omega_{-}
\\[0.5ex]
\nabla\cdot\Vmb=0 \quad&\mbox{in}\quad \Omega_{-}\ 
\\[0.5ex]
-\nabla \cdot \bsigma^+(\Vplus,\Pplus)=g^+
 \quad&\mbox{in}\quad \Omega_{+}
\\[0.5ex]
\nabla\cdot \Vplus= 0\quad&\mbox{in}\quad \Omega_{+}
\\[0.5ex]
\bsigma^+(\Vplus, \Pplus)\cdot \nn  = l-\pmob \nn +\frac \beta 2 ((\Vplus+\Vmb)\cdot \nn) \nn
  \quad &\mbox{on} \quad \Sigma
    \\[0.5ex]
(\Vplus -\Vmb)\cdot \nn= h
\quad &\mbox{on}\quad \Sigma
\\[0.2ex]
\Vplus=0
  \quad &\mbox{on}\quad \Gamma \ ,
   \end{array}
    \right.
\end{equation}
where $\bsigma^+ (\Vplus,\Pplus)=2\mu\bd(\Vplus)- \Pplus\nn$, associated with the data $g, h, l$. We remark that because of the divergence free condition, we need the compatibility condition $\dsp \int_\Sigma h d\sigma=0$.

\begin{rem}
The first asymptotic $\sv_{0}=(\Vplus_{0},\Vmb_{0})$ satisfies the  problem \eqref{SBBJj}  with $g^\pm=\sg^\pm$, $ l=\sl$ and $h=0$.  The term $\sv_{1}=(\Vplus_{1},\Vmb_{1})$ satisfies the problem \eqref{SBBJj}  with $g\pm=0$, $h=\nu_{1}^0$, $ l_\tau=\sqrt{\kappa}\ \sw_{0}^0$ and  $l\cdot \nn=\frac{\beta}{2} \nu_{1}^0$.  
\end{rem}

We first address this problem when $h=0$.

\subsection{Elementary problem without jump for the normal components}

We  consider the following problem:
\begin{equation}
\label{SBBJk}
 \left\{
   \begin{array}{lll}
    \kappa \Vmb+ \nabla \pmob=g^{-}
 \quad&\mbox{in}\quad \Omega_{-}
\\[0.5ex]
\nabla\cdot \Vmb=0 \quad&\mbox{in}\quad \Omega_{-}\ 
\\[0.5ex]
 -\nabla \cdot \bsigma^+(\Vplus,\Pplus)=g^+
 \quad&\mbox{in}\quad \Omega_{+}
\\[0.5ex]
\nabla\cdot \Vplus= 0\quad&\mbox{in}\quad \Omega_{+}
\\[0.5ex]
 \bsigma^+(\Vplus, \Pplus)\cdot \nn  = l-\pmob \nn +\frac \beta 2 ((\Vplus+\Vmb)\cdot \nn) \nn
  \quad &\mbox{on} \quad \Sigma
    \\[0.5ex]
(\Vplus -\Vmb)\cdot \nn= 0 \quad &\mbox{on}\quad \Sigma
 \\[0.2ex]
\Vplus=0
  \quad &\mbox{on}\quad \Gamma \ .
   \end{array}
    \right.
\end{equation}

Then, we introduce a variational problem for $\sv=(\Vmb,\Vplus)$ associated with the problem \eqref{SBBJk} in the space
$$
W=\{ \su \in \bL^2 (\Omega) \ | \, \nabla \su^+ \in \bL^2(\Omega_{+ }) \, , \quad \Div \su =0  \quad  \mbox{in}\quad \Omega , \quad \su^+ =0 \quad  \mbox{on}  \quad\Gamma
\} \ .
$$ 
Such a  variational formulation writes : Find $\sv=(\Vmb,\Vplus)  \in W$ such that
\begin{equation}
   \forall \su \in  W, \quad 
   a(\sv,\su) = b(\su) ,
\label{E1}
\end{equation}
where
$$  a (\sv,\su) :=\mu\int_{\Omega_{+}} \bd (\Vplus) :\bd(\su^+) \,\dr\xx
  + \kappa \int_{\Omega_{-}} \Vmb \cdot \su^-\, \dr\xx 
  +\beta  \!\int_{\Sigma}  
   (\sv \cdot \nn)( \su \cdot \nn)  
  \, \dr s \ ,
$$
and
$$
 b(\su)
     =   \int_{\Omega}  g \cdot \su  \,\dr\xx 
     - \!\int_{\Sigma}  l\cdot \su^+
  \, \dr s \   .
$$  
Endowed with the norm 
$$
\| \sv \|_{W}= \| \Vplus \|_{1,\Omega_{+}}+ \| \Vmb \|_{0,\Omega_{-}}+  \| \sv\cdot\nn \|_{0,\Sigma} \ ,
$$
 the functional space $W$ is a Hilbert space since $W$ is a closed subspace of the Hilbert space $\{ \su \in \bL^2 (\Omega) \ | \, \nabla \su^+ \in \bL^2(\Omega_{+ }) \, , \quad \su^+ =0 \quad  \mbox{on}  \quad\Gamma\} $.

As a consequence of both the Poincar\'e inequality in $ \bH_{0,\Gamma}^1(\Omega_{+})$ and the Lax-Milgram lemma, we infer the well-posedness of problem \eqref{E1} :

\begin{prop}
\label{laxmil}
For given data $g\in\bL^2(\Omega)$  and $l\in  \bL^2(\Sigma)$,  the  problem \eqref{E1} is well posed in $W$. 
\end{prop}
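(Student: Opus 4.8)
The plan is to apply the Lax--Milgram lemma to the bilinear form $a$ and the linear form $b$ on the Hilbert space $W$. The statement to prove is Proposition~\ref{laxmil}: well-posedness of \eqref{E1} for data $g \in \bL^2(\Omega)$ and $l \in \bL^2(\Sigma)$. Since the text has already established that $W$ is a Hilbert space (as a closed subspace of $\{\su \in \bL^2(\Omega) \mid \nabla\su^+ \in \bL^2(\Omega_+), \ \su^+=0 \text{ on } \Gamma\}$), the three things I would verify are: continuity of $a$, coercivity of $a$, and continuity of $b$.

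\emph{Continuity.} For the bilinear form, I would bound each of the three terms of $a(\sv,\su)$ separately by the product of $W$-norms. The volume term over $\Omega_+$ is controlled by $\|\bd(\Vplus)\|_{0,\Omega_+}\|\bd(\su^+)\|_{0,\Omega_+} \leq \|\Vplus\|_{1,\Omega_+}\|\su^+\|_{1,\Omega_+}$, the $\Omega_-$ term by Cauchy--Schwarz in $\bL^2(\Omega_-)$, and the interface term by Cauchy--Schwarz in $\bL^2(\Sigma)$, each of which is dominated by the corresponding piece of $\|\cdot\|_W$. For $b$, the volume integral is handled by Cauchy--Schwarz against $g \in \bL^2(\Omega)$, and the boundary integral requires a trace inequality to estimate $\|\su^+\|_{0,\Sigma} \lesssim \|\su^+\|_{1,\Omega_+} \leq \|\su\|_W$, so that $|b(\su)| \leq C(\|g\|_{0,\Omega} + \|l\|_{0,\Sigma})\|\su\|_W$.

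\emph{Coercivity} is the crux. Testing with $\su = \sv$ gives
\begin{equation*}
a(\sv,\sv) = \mu\|\bd(\Vplus)\|_{0,\Omega_+}^2 + \kappa\|\Vmb\|_{0,\Omega_-}^2 + \beta\|\sv\cdot\nn\|_{0,\Sigma}^2,
\end{equation*}
and the key point is that the first term dominates $\|\Vplus\|_{1,\Omega_+}^2$: since $\su^+$ vanishes on $\Gamma = \partial\Omega$, the Poincar\'e inequality in $\bH_{0,\Gamma}^1(\Omega_+)$ (invoked in the statement) combined with Korn's inequality yields $\|\bd(\Vplus)\|_{0,\Omega_+} \geq C\|\Vplus\|_{1,\Omega_+}$, exactly as in the proof of Theorem~\ref{thm32}. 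Together with the $\kappa$-term controlling $\|\Vmb\|_{0,\Omega_-}^2$ and the $\beta$-term controlling $\|\sv\cdot\nn\|_{0,\Sigma}^2$, this gives $a(\sv,\sv) \geq c\|\sv\|_W^2$ after absorbing the mixed cross-terms from the squared $W$-norm (the norm being a sum rather than a sum of squares is harmless, since on a finite sum the two are equivalent).

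\emph{The main obstacle} I expect is purely in justifying that the three coercivity contributions genuinely control all three summands of $\|\sv\|_W$ simultaneously: one must ensure the $\Omega_+$ Poincar\'e--Korn estimate is available (it hinges on the full Dirichlet condition on $\Gamma$ and on $\Omega_+$ being Lipschitz and bounded), and one must check the trace term $\|\sv\cdot\nn\|_{0,\Sigma}$ does not secretly require extra regularity of $\Vmb$ beyond its $\bL^2(\Omega_-)$ membership --- but this is precisely why the normal-component trace is built into the norm and into the space $W$ via the divergence-free constraint $\Div\su = 0$ in all of $\Omega$, which makes $\su\cdot\nn$ well defined in $\H^{-1/2}(\Sigma)$ and, under the structure here, in $\bL^2(\Sigma)$. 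Once these are in place, Lax--Milgram applies verbatim and delivers existence and uniqueness of $\sv \in W$.
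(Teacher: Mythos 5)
Your proof is correct and follows exactly the route the paper takes: the paper's own argument is the single sentence that well-posedness follows from the Poincar\'e (Korn-type) inequality in $\bH^1_{0,\Gamma}(\Omega_+)$ together with the Lax--Milgram lemma, and you have simply filled in the standard continuity and coercivity verifications, including the correct observation that the normal trace on $\Sigma$ is well defined through the divergence-free constraint. No gaps.
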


If we assume in addition that $\rot g^-\in \bL^2(\Omega_{-})$, then the solution $\sv$ of the problem \eqref{E1}  belongs to the space $V$ (we remind that $V$ is introduced in the beginning of Section \ref{UE}). 
\begin{prop}
For given data $g\in\bL^2(\Omega)$ with $\rot g^-\in \bL^2(\Omega_{-})$,  and $l\in  \bL^2(\Sigma)$,  the solution $\sv$ of the  problem \eqref{E1}  belongs to the space $V$. 
\end{prop}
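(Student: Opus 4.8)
The plan is to upgrade the porous velocity $\Vmb$ from $\bL^2(\Omega_{-})$ to $\bH^1(\Omega_{-})$ by a div--curl argument; once this is done, the remaining defining properties of $V$ (namely $\Vplus\in\bH^1(\Omega_{+})$, the divergence-free constraints on each side, and $\Vplus=0$ on $\Gamma$) are already built into $W$, so that $\sv\in V$ follows immediately. First I would recover the porous pressure. Testing \eqref{E1} against fields $\su\in W$ that are compactly supported in $\Omega_{-}$ (extended by $0$ on $\Omega_{+}$, such fields indeed lie in $W$, and for them the $\Omega_{+}$ and $\Sigma$ contributions to $a$ and $b$ vanish) yields $\int_{\Omega_{-}}(\kappa\Vmb-g^-)\cdot\su^-\,\dr\xx=0$ for every divergence-free $\su^-\in\mathcal{D}(\Omega_{-})^3$. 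By de Rham's theorem there exists $\pmob$ with $\kappa\Vmb+\nabla\pmob=g^-$ in $\Omega_{-}$, and the identity $\nabla\pmob=g^--\kappa\Vmb\in\bL^2(\Omega_{-})$ gives $\pmob\in\H^1(\Omega_{-})$. Taking the curl and using $\rot\nabla\pmob=0$, I obtain $\rot\Vmb=\tfrac1\kappa\rot g^-$, which lies in $\bL^2(\Omega_{-})$ precisely by the additional hypothesis on $g^-$.

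Next I collect the remaining ingredients of the div--curl regularity criterion. The constraint $\Div\su=0$ in all of $\Omega$ imposed on elements of $W$ means there is no jump of the normal component across $\Sigma$; applied to the solution it yields both $\Div\Vmb=0$ in $\Omega_{-}$ and the normal-trace identity $\Vmb\cdot\nn=\Vplus\cdot\nn$ on $\Sigma$, valid a priori in $\H^{-1/2}(\Sigma)$. Since $\Vplus\in\bH^1(\Omega_{+})$, its trace satisfies $\Vplus\cdot\nn\in\H^{1/2}(\Sigma)$, and therefore $\Vmb\cdot\nn\in\H^{1/2}(\Sigma)$. Thus $\Vmb$ is a field of $\bL^2(\Omega_{-})$ whose divergence vanishes, whose curl lies in $\bL^2(\Omega_{-})$, and whose normal trace lies in $\H^{1/2}(\Sigma)$.

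I would then invoke the classical regularity theorem for vector fields on a bounded domain with smooth boundary (in the spirit of Girault--Raviart, or of Amrouche--Bernardi--Dauge--Girault for vector potentials): a field $\uu$ with $\uu,\Div\uu,\rot\uu\in\bL^2(\Omega_{-})$ and $\uu\cdot\nn\in\H^{1/2}(\Sigma)$ belongs to $\bH^1(\Omega_{-})$, with a continuous estimate. Since $\Sigma$ is smooth by Hypothesis, this applies to $\Vmb$ and gives $\Vmb\in\bH^1(\Omega_{-})$. Combined with $\Vplus\in\bH^1(\Omega_{+})$, the two divergence-free conditions, and the boundary condition $\Vplus=0$ on $\Gamma$, this shows $\sv=(\Vmb,\Vplus)\in\PH^1(\Omega)$ and hence $\sv\in V$.

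The step requiring genuine care is the div--curl regularity estimate itself. One must verify that prescribing the normal trace on the \emph{interface} $\Sigma=\partial\Omega_{-}$ is enough to close the argument, and that the geometry of $\Omega_{-}$ excludes the singular-field phenomena that obstruct $\bH^1$ regularity on non-smooth domains; the smoothness assumption on $\Sigma$ is exactly what removes this difficulty. The de Rham extraction of $\pmob$ and the matching of the two one-sided normal traces across $\Sigma$ are routine once this regularity result is in hand.
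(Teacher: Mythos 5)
Your proof is correct and follows essentially the same route as the paper: take the curl of the Darcy equation to get $\rot\Vmb=\tfrac1\kappa\rot g^-\in\bL^2(\Omega_{-})$, combine with $\Div\Vmb=0$ and the normal-trace matching $\Vmb\cdot\nn=\Vplus\cdot\nn\in\H^{1/2}(\Sigma)$ inherited from $\Vplus\in\bH^1(\Omega_{+})$, and conclude by the div--curl regularity theorem on the smooth domain $\Omega_{-}$. The only difference is that you justify the strong Darcy equation via de Rham's theorem from the weak formulation, a step the paper takes for granted.
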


\begin{proof}
Taking the $\rot$ in the first equation in \eqref{SBBJk}, $ \kappa \rot\Vmb =\rot g^{-}\in \bL^2( \Omega_{-})$. Moreover $\nabla\cdot \Vmb=0$ and $\Vmb\cdot\nn=\Vplus\cdot\nn \in \H^{\frac12}(\Sigma)$ since $\Vplus$ belongs to  $\bH^1(\Omega_{+})$. Since $\Omega_{-}$ is a smooth domain, we infer $\Vmb \in \bH^1(\Omega_{-})$.
\end{proof}

The next proposition ensures a regularity result in Sobolev spaces for the solutions of problem \eqref{SBBJk}. It is the main result of this section.

\begin{prop}
\label{prop-reg1}
Let $k\geq 1$. We assume that $g^+\in \bH^{k-1}(\Omega_{+})$, $g^-\in \bH^k(\Omega_{-})$, and $l\in H^{k-\frac 1 2}(\Sigma)$. Then the solution of the problem \eqref{SBBJk} satisfies
$\Vplus\in \bH^{k+1}(\Omega_{+})$, $\Vmb\in \bH^{k}(\Omega_{-})$, $\Pplus\in \H^{k}(\Omega_{+})$ and $\pmob\in \H^{k+1}(\Omega_{-})$.
\end{prop}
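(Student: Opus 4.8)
The plan is to decouple the Darcy block in $\Omega_-$ from the Stokes block in $\Omega_+$ through a Dirichlet--to--Neumann type (Steklov--Poincar\'e) operator carried on $\Sigma$, reducing \eqref{SBBJk} to a single Stokes system on $\Omega_+$ with mixed boundary conditions, and then to invoke classical elliptic regularity for that system. The key structural fact, used throughout, is that since $\overline{\Omega_-}\subset\Omega$ the interface $\Sigma$ and the external boundary $\Gamma$ are disjoint smooth compact surfaces, so the two types of boundary condition never meet and no corner singularity arises.

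First I would eliminate the porous unknowns. From $\Vmb=\kappa^{-1}(g^--\nabla\pmob)$ and $\nabla\cdot\Vmb=0$ one gets $\Delta\pmob=\nabla\cdot g^-$ in $\Omega_-$, while the normal trace of the Darcy equation together with the transmission condition $(\Vplus-\Vmb)\cdot\nn=0$ gives the Neumann condition $\partial_\nn\pmob=g^-\cdot\nn-\kappa\,\Vplus\cdot\nn$ on $\Sigma$. This Neumann problem is solvable, uniquely up to a constant, because its compatibility condition reduces to $\int_\Sigma\Vplus\cdot\nn=0$, which holds since $\Vplus$ is divergence free in $\Omega_+$ and vanishes on $\Gamma$. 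I would then introduce the operator $T$ mapping the normal velocity $\phi:=\Vplus\cdot\nn$ to the pressure trace $\pmob_{|\Sigma}$ via this problem, and record its two decisive features: the $\phi$--dependent part is a Neumann--to--Dirichlet map for the Laplacian on the smooth domain $\Omega_-$, hence smoothing of order one, $T:\H^{s}(\Sigma)\to\H^{s+1}(\Sigma)$; and the fixed contribution of $g^-$ to $\pmob_{|\Sigma}$ lies in $\H^{k+\frac12}(\Sigma)$ whenever $g^-\in\bH^k(\Omega_-)$.

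Substituting $\pmob_{|\Sigma}=T(\Vplus\cdot\nn)$ and $\Vmb\cdot\nn=\Vplus\cdot\nn$ into the stress transmission condition closes the system for $(\Vplus,\Pplus)$ alone:
\begin{equation*}
-\mu\Delta\Vplus+\nabla\Pplus=g^+,\quad \nabla\cdot\Vplus=0 \quad\mbox{in}\quad\Omega_+,\qquad \Vplus=0 \quad\mbox{on}\quad\Gamma,
\end{equation*}
\begin{equation*}
\bsigma^+(\Vplus,\Pplus)\cdot\nn = l - T(\Vplus\cdot\nn)\,\nn + \beta\,(\Vplus\cdot\nn)\,\nn \quad\mbox{on}\quad\Sigma.
\end{equation*}
Since $T$ is of order $-1$, the terms $T(\Vplus\cdot\nn)\,\nn$ and $\beta\,(\Vplus\cdot\nn)\,\nn$ are of strictly lower order than the traction $\bsigma^+(\Vplus,\Pplus)\cdot\nn$; they can therefore be frozen and moved to the data, leaving the standard Stokes operator with a pure traction condition on $\Sigma$ and a homogeneous Dirichlet condition on $\Gamma$. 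On each of these disjoint smooth pieces the Agmon--Douglis--Nirenberg complementing (Shapiro--Lopatinski) condition is satisfied, so Sobolev elliptic estimates are available.

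I would close the argument by induction on the regularity index. The weak solution of Proposition \ref{laxmil} provides $\Vplus\in\bH^1(\Omega_+)$ and $\Pplus\in\L^2(\Omega_+)$; assuming $\Vplus\in\bH^{m}(\Omega_+)$ and $\Pplus\in\H^{m-1}(\Omega_+)$ for some $1\le m\le k$, the body force $g^+\in\bH^{m-1}(\Omega_+)$ and the boundary datum $l-T(\Vplus\cdot\nn)\nn+\beta(\Vplus\cdot\nn)\nn$ belongs to $\bH^{m-\frac12}(\Sigma)$ (the $T$--term even to $\bH^{m+\frac12}$), whence the Stokes estimate yields $\Vplus\in\bH^{m+1}(\Omega_+)$ and $\Pplus\in\H^{m}(\Omega_+)$; iterating up to $m=k$ gives the announced $\Vplus\in\bH^{k+1}(\Omega_+)$ and $\Pplus\in\H^{k}(\Omega_+)$. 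Finally one recovers the porous quantities: with $\Vplus\cdot\nn\in\H^{k+\frac12}(\Sigma)$ the Neumann data $g^-\cdot\nn-\kappa\,\Vplus\cdot\nn\in\H^{k-\frac12}(\Sigma)$ and the source $\nabla\cdot g^-\in\H^{k-1}(\Omega_-)$ give $\pmob\in\H^{k+1}(\Omega_-)$, and then $\Vmb=\kappa^{-1}(g^--\nabla\pmob)\in\bH^{k}(\Omega_-)$. The main obstacle is precisely this Stokes regularity with the traction boundary condition coupled nonlocally through $T$: one must check carefully that the coupling term is genuinely of lower order so that the induction closes, and that the mixed problem satisfies the complementing condition; the disjointness of $\Sigma$ and $\Gamma$ is what removes corner singularities and makes the clean Sobolev scale of the statement attainable.
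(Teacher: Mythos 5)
Your proposal is correct and follows essentially the same route as the paper: elimination of the Darcy unknowns through the Neumann problem for $\pmob$, introduction of the Neumann--to--Dirichlet operator $T$ on $\Sigma$ gaining one derivative, reduction to the Stokes system \eqref{VplusDtoN} with mixed traction/Dirichlet boundary conditions, and a bootstrap on the regularity index using elliptic regularity for that mixed problem (the paper cites Boyer--Fabrie where you invoke the ADN complementing condition). Your explicit verification of the compatibility condition $\int_\Sigma \Vplus\cdot\nn\,\dr\sigma=0$ for the Neumann problem is a welcome detail that the paper leaves implicit, but it does not alter the argument.
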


\begin{proof} 
We remark that if we know $\Vmb\cdot \nn$ on $\Sigma$, then $\pmob$ and $\Vmb$ are completely determined. Indeed, taking the divergence of the first equation in \eqref{SBBJk} we obtain that
$$\Delta \pmob=\nabla \cdot g^-\ .$$
In addition, taking the scalar product of the same equation with $\nn$, we obtain that
$$\kappa \Vmb\cdot \nn+\frac{\partial \pmob}{\partial \nn}=g^-\cdot \nn.$$
Thus, $\pmob$ satisfies:
$$
\left\{
\begin{array}{l}
  \dsp \Delta \pmob=\nabla \cdot g^-\quad \mbox{in} \quad \Omega_{-}\\\\
\dsp \frac{\partial \pmob}{\partial \nn}=g^-\cdot \nn-\kappa \Vmb\cdot \nn\quad
\mbox{on} \quad \Sigma
\end{array}
\right.
$$
With an additional condition on the mean of $\pmob$, this characterizes completely $\pmob$.

We fix $j\in \{1, \cdot ,k\}$. We introduce the following Dirichlet to Neumann operator $T:\H^{j-\frac{1}{2}}(\Sigma)\longrightarrow \H^{j+\frac{1}{2}}(\Sigma)$ in the following way:
for $\varphi \in \H^{j-\frac{1}{2}}(\Sigma)$ we solve 
\begin{equation}
\label{pmbDN}
\left\{
\begin{array}{l}
  \dsp \Delta p=\nabla \cdot g^-\quad \mbox{in} \quad \Omega_{-}\\\\
\dsp \frac{\partial p}{\partial \nn}=g^-\cdot \nn-\kappa \varphi
\mbox{ on } \quad \Sigma\\
\\
\int_{\Omega_{-}}p \ \dr \xx=0
\end{array}
\right.
\end{equation}
and we denote by 
$T(\varphi)$ the trace of the obtained $p$ on $\Sigma$.

We remark that since $g^-\in \bH^k(\Omega_{-})$, if $\varphi \in \H^{j-\frac{1}{2}}(\Sigma)$ then by classical elliptic regularity results, $p\in \H^{j+1}(\Omega_{-})$ so that there exists a constant $C$ independent on $\varphi$ and $g^-$ such that
\begin{equation}
\label{esti-DtoN}
\Vert p \Vert_{\H^{j+1}(\Omega_{-})}+
\Vert T(\varphi)\Vert_{\H^{j+\frac{1}{2}}(\Sigma)}\leq C(\Vert g^-\Vert_{ \bH^k(\Omega_{-})} + \Vert \varphi\Vert_{\H^{j-\frac{1}{2}}(\Sigma)}).
\end{equation}

In addition, we have $ v^-=\dsp \frac{1}{\kappa}(g^- - \nabla p^-)$, and we infer
$$\Vert v^-\Vert_{\bH^{j}(\Omega_{-})}\leq C (\Vert g^-\Vert_{ \bH^k(\Omega_{-})} + \Vert \varphi\Vert_{\H^{j-\frac{1}{2}}(\Sigma)}).$$

Now on the domain $\Omega_{+}$, we rewrite the boundary conditions: we remark first that $\Vplus\cdot \nn=\Vmb\cdot \nn$ on $\Sigma$. In addition, from the equations on $\Omega_{-}$, $\pmob=T(\Vmb\cdot \nn)$. Therefore, $(\Vplus, \Pplus)$ satisfies the problem:

\begin{equation}
\label{VplusDtoN}
\left\{\begin{array}{l}
-\nabla \cdot \bsigma^+(\Vplus,\Pplus)=g^+
\quad \mbox{in} \quad \Omega_{+}\\\\
\nabla\cdot \Vplus=0 \quad \mbox{in} \quad \Omega_{+}\\\\
\dsp \bsigma^+(\Vplus,\Pplus)\cdot \nn = l + \beta (\Vplus\cdot \nn)\nn - T(\Vplus \cdot \nn)\nn  \quad \mbox{on} \quad\Sigma\\
\\
\Vplus = 0  \quad \mbox{on} \quad \Gamma \ .
\end{array}
\right.
\end{equation}

Hereafter we use the next proposition which ensures a well-posedness result together with elliptic regularity result for the Stokes operator with mixed boundary conditions (namely with a stress boundary condition on $\Sigma$ and a Dirichlet boundary condition on $\Gamma$):
\begin{prop}
\label{prop-BFab}
Let $g\in \bL^{2}(\Omega_{+})$ and $\gamma \in \bH ^{-\frac 1 2}(\Sigma)$. Then the problem  
\begin{equation}
\label{Eprop-BFab}
\left\{\begin{array}{l}
-\nabla \cdot \bsigma^+(w,p)=g
\quad \mbox{in} \quad \Omega_{+}\\
\\
\nabla \cdot w=0\quad \mbox{in} \quad \Omega_{+}
\\ \\
w=0 \quad \mbox{in} \quad \Gamma
\\ \\
\dsp \bsigma^+(w,p)\cdot \nn =\gamma 
\quad \mbox{in} \quad \Sigma
\end{array}\right.
\end{equation}
has a unique solution $(w,p)$ in the space $\bH_{0,\Gamma}^1(\Omega_{+})\times \L^2(\Omega_{+})$. 

Let $s\geqslant 1$.  We assume that there exists a solution $(w,p)$ of the problem \eqref{Eprop-BFab} in the space $\bH^{s}(\Omega_{+})\times\H^{s-1}(\Omega_{+})$. If $g\in \bH^{s-1}(\Omega_{+})$ and $\gamma \in \bH ^{s-\frac 1 2}(\Sigma)$, then $(w,p)\in \bH^{s+1}(\Omega_{+})\times\H^{s}(\Omega_{+})$ and there exists a constant $C>0$ such that
$$\Vert w\Vert_{\bH^{s+1}(\Omega_{+})} + \Vert p \Vert_{\H^{s}(\Omega_{+})}\leq C\left( \Vert g\Vert_{\bH^{s-1}(\Omega_{+})} + \Vert \gamma \Vert_{\bH^{s-\frac 1 2}(\Sigma)}\right)\ .$$
\end{prop}
This proposition is a consequence of elliptic regularity result for the Stokes operator with normal stress boundary conditions (see \cite[Th. III.5.7]{BF05} on page 192).

Using Prop. \ref{prop-BFab}, let us prove now by induction on $j$ that for all $j\in\{1, \ldots, k\}$, we have the property ${\mathcal P}(j)$:
$${\mathcal P}(j):\quad \Vplus\in \bH^{j+1}(\Omega_{+}), \quad \Pplus\in \H^j(\Omega_{+}).$$

\vspace{2mm}

{\bf Proof of ${\mathcal P}(1)$:} we already know that $\Vplus\in \bH^1(\Omega_{+})$ using   Proposition \ref{prop-reg1}. Hence,  $\Vplus\cdot \nn\in \H^{\frac 1 2}(\Sigma)$. Thus, by \eqref{esti-DtoN}, we obtain that the right-hand-side of the third equation in \eqref{VplusDtoN} is in $ \bH^{\frac 1 2}(\Sigma)$ and thus, by Proposition \ref{prop-BFab}, we obtain that
$$\Vplus\in \bH^2(\Omega_{+})\quad\mbox{ and }\quad \Pplus\in \H^1(\Omega_{+}).$$

\vspace{2mm}
{\bf Induction:} we assume that $j<k$ and that ${\mathcal P}(j)$ is satisfied, {\it i.e.}   $\Vplus\in \bH^{j+1}(\Omega_{+})$. Hence $\Vplus\cdot \nn \in \H^{j+\frac 1 2}(\Sigma)$   . Thus, by \eqref{esti-DtoN}, we obtain that the right-hand-side of the third equation in \eqref{VplusDtoN} is in $ \bH^{j+\frac 1 2}(\Sigma)$ and thus, by Proposition \ref{prop-BFab}, we obtain that
$$\Vplus\in \bH^{j+2}(\Omega_{+})\quad\mbox{ and }\quad \Pplus\in \H^{j+1}(\Omega_{+}).$$

This complete the proof of Proposition \ref{prop-reg1}.

\end{proof}

\subsection{General existence and regularity result for the elementary problems}

We address now the elementary problem with non vanishing jump for the normal velocity \eqref{SBBJj}. 
We prove the following result:

\begin{prop}
\label{prop-elem1}
Let $k\geq 1$. We consider $g^-\in \bH^k(\Omega_{-})$, $g^+\in \bH^{k-1}(\Omega_{+})$, $l\in \bH^{k-\frac{1}{2}}(\Sigma)$ and $h\in \H^{k-\frac{1}{2}}(\Sigma)$. Assume that $h$ satisfies the compatibility condition 
$$\int_\Sigma h \,\dr \sigma=0.$$
Then the solution of problem \eqref{SBBJj} satisfies $\Vplus\in \bH^{k+1}(\Omega_{+})$, $\Vmb\in \bH^{k}(\Omega_{-})$, $\Pplus\in \H^{k}(\Omega_{+})$ and $\pmob\in \H^{k+1}(\Omega_{-})$.
\end{prop}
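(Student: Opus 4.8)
The plan is to reduce the general problem \eqref{SBBJj} to the no--jump case $h=0$, which is already settled by Propositions \ref{laxmil} and \ref{prop-reg1}, by subtracting a suitable divergence--free lifting of the normal jump $h$. The key idea is to place this lifting entirely on the porous side $\Omega_{-}$, whose only boundary is $\Sigma$, so that a single scalar potential does the job.

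\emph{Construction of the lifting.} Let $\Phi$ be a solution, unique up to an additive constant, of the Neumann problem
\begin{equation*}
\Delta\Phi=0\quad\text{in }\Omega_{-},\qquad \partial_\nn\Phi=-h\quad\text{on }\Sigma.
\end{equation*}
This problem is solvable precisely because of the compatibility assumption $\int_\Sigma h\,\dr\sigma=0$; and since $\Sigma$ is smooth and $h\in\H^{k-\frac12}(\Sigma)$, elliptic regularity for the Neumann problem gives $\Phi\in\H^{k+1}(\Omega_{-})$. I then set $\Vmb_*:=\nabla\Phi\in\bH^{k}(\Omega_{-})$, which satisfies $\nabla\cdot\Vmb_*=\Delta\Phi=0$ and $\Vmb_*\cdot\nn=\partial_\nn\Phi=-h$ on $\Sigma$.

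\emph{Reduction to $h=0$.} I would substitute $\Vplus=w^+$, $\Vmb=w^-+\Vmb_*$ and $\pmob=\tilde\pmob-\kappa\Phi$ into \eqref{SBBJj}. Because $\Vmb_*$ is a gradient, the extra term $\kappa\Vmb_*=\nabla(\kappa\Phi)$ produced in the first equation is absorbed into the pressure, so that equation becomes $\kappa w^-+\nabla\tilde\pmob=g^-$ with unchanged right--hand side, while $\nabla\cdot\Vmb_*=0$ preserves the incompressibility of $w^-$. Using $\Vmb_*\cdot\nn=-h$, the jump condition $(\Vplus-\Vmb)\cdot\nn=h$ becomes $(w^+-w^-)\cdot\nn=0$. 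In the stress condition, writing $-\pmob\nn=-\tilde\pmob\nn+\kappa\Phi\nn$ and $\Vmb\cdot\nn=w^-\cdot\nn-h$, one checks that it keeps exactly the form in \eqref{SBBJk} with $l$ replaced by $\tilde l:=l+\kappa\Phi\nn-\tfrac{\beta}{2}h\nn$; since $\Phi_{\vert\Sigma}\in\H^{k+\frac12}(\Sigma)$ and $h\in\H^{k-\frac12}(\Sigma)$, we have $\tilde l\in\bH^{k-\frac12}(\Sigma)$. Finally $w^+=\Vplus=0$ on $\Gamma$ is preserved. Hence $(w^-,w^+)$ together with $(\tilde\pmob,\Pplus)$ solves the no--jump problem \eqref{SBBJk} with data $g^-\in\bH^{k}(\Omega_{-})$, $g^+\in\bH^{k-1}(\Omega_{+})$ and $\tilde l\in\bH^{k-\frac12}(\Sigma)$.

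\emph{Conclusion.} Existence of a weak solution to the reduced problem follows from Proposition \ref{laxmil}, and Proposition \ref{prop-reg1} then yields $w^+\in\bH^{k+1}(\Omega_{+})$, $w^-\in\bH^{k}(\Omega_{-})$, $\Pplus\in\H^{k}(\Omega_{+})$ and $\tilde\pmob\in\H^{k+1}(\Omega_{-})$. Undoing the substitution gives $\Vplus=w^+\in\bH^{k+1}(\Omega_{+})$, $\Vmb=w^-+\nabla\Phi\in\bH^{k}(\Omega_{-})$, $\Pplus\in\H^{k}(\Omega_{+})$ and $\pmob=\tilde\pmob-\kappa\Phi\in\H^{k+1}(\Omega_{-})$, which is exactly the claimed regularity. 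The only genuinely delicate step is the choice of lifting: by putting it on $\Omega_{-}$ one avoids having to produce a divergence--free extension compatible with the full Dirichlet condition on $\Gamma$, and the compatibility condition $\int_\Sigma h\,\dr\sigma=0$ is precisely what makes the scalar Neumann potential available; the remaining work is the bookkeeping verifying that the gradient lifting is absorbed into the pressure and that the modified data land in the correct Sobolev classes.
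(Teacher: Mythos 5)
Your proof is correct and follows essentially the same route as the paper's: subtract a divergence-free lifting of the normal jump $h$ supported in $\Omega_{-}$ and reduce to the no-jump case settled by Propositions \ref{laxmil} and \ref{prop-reg1}. The only difference is cosmetic but worth noting: the paper takes an arbitrary divergence-free $w\in\bH^k(\Omega_{-})$ with $w\cdot\nn=-h$ and moves $\kappa w$ into the source $g^-$, whereas you construct the lifting explicitly as the harmonic gradient $\nabla\Phi$ (thereby making transparent where the compatibility condition $\int_\Sigma h\,\dr\sigma=0$ is used) and absorb $\kappa\nabla\Phi$ into the pressure, shifting $l$ by $\kappa\Phi\nn-\tfrac{\beta}{2}h\nn$ instead.
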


\begin{proof}
Let $w\in \bH^k(\Omega_{-})$ such that $\nabla \cdot w=0$ in $\Omega_{-}$ with $w\cdot \nn=- h$ on $\Sigma$. We denote $\sw=\Vmb-w$. Then $(\Vplus,\Pplus,\Vmb,\pmob)$ satisfies \eqref{SBBJj} if and only if $(\Vplus,\Pplus,\sw,\pmob)$ satisfies: 

\begin{equation}
\label{SBBJktildes}
 \left\{
   \begin{array}{lll}
    \kappa \sw+ \nabla \pmob=g^{-}-\kappa w
 \quad&\mbox{in}\quad \Omega_{-}
\\[0.5ex]
\nabla\cdot \sw=0 \quad&\mbox{in}\quad \Omega_{-}\ 
\\[0.5ex]
-\nabla \cdot \bsigma^+ ( \Vplus,\Pplus)=g^+
 \quad&\mbox{in}\quad \Omega_{+}
\\[0.5ex]
\nabla\cdot \Vplus= 0\quad&\mbox{in}\quad \Omega_{+}
\\[0.5ex]
\dsp \bsigma^+(\Vplus, \Pplus)\cdot\nn = -\pmob \nn +\frac \beta  2\left((\Vplus+\sw) \cdot \nn\right) \nn+  l - \frac \beta 2 h \, \nn
 \quad &\mbox{on} \quad \Sigma
    \\[0.5ex]
(\Vplus -\sw)\cdot \nn= 0\quad &\mbox{on}\quad \Sigma
\\[0.2ex]
\Vplus=0
  \quad &\mbox{on}\quad \Gamma \ .
   \end{array}
    \right.
\end{equation}

We remark that $g^--\kappa w\in \bH^k(\Omega_{-})$ and that $ l - \frac \beta 2 h \, \nn \in \bH^{k-\frac{1}{2}}(\Sigma)$ therefore, by Proposition \ref{prop-reg1}, we obtain that $\Vplus\in \bH^{k+1}(\Omega_{+})$, $\Pplus\in \H^{k}(\Omega_{+})$ and $\pmob\in \H^{k+1}(\Omega_{-})$. We obtain in addition that $\sw \in \bH^{k}(\Omega_{-})$ and since $w\in \bH^{k}(\Omega_{-})$, we obtain the same regularity for $\Vmb$.
\end{proof}


\section{Estimates of remainders}
\label{SERem}

We address the validation of the WKB expansion found before. We claim the following theorem:

\begin{thm}
\label{DAS-WKB}
Let $\bk\geq 5$. We assume that the data in \eqref{0SBBJ} satisfy hypothesis \eqref{regdata} that we recall here:
$$
\sg^-\in \bH^\bk(\Omega_{-}),\quad   \sg^+\in \bH^{\bk-1}(\Omega_{+}),\quad  \sl\in \bH^{\bk-\frac{1}{2}}(\Sigma)\quad\mbox{and}\quad\sh\in \bH^{\bk-\frac{1}{2}}(\Sigma).
$$
We fix $k=\bk -2$.
Let $\Vplus_j$, $\Vmb_j$, $\Vmt_j$, $\Pplus_j$, $\pmob_j$ and $\fp_j$,$j\in \{0, \ldots, \sk-1\}$ given by Proposition \ref{existence-profils}. We define  $(\sr_{k,\eps}^\pm, \sq_{k,\eps}^\pm)$  by removing to the solution $( \sv_{\eps},  \sp_{\eps})$ of \eqref{0SBBJ} the truncated expansion up to the order $\eps^{\frac k2}$ :
\begin{gather}
\label{Er+}
  \sr_{k,\eps}^+(x)= \vepsp(x) - \di\sum_{j=0}^{k}\eps^{\frac j2} \Vplus_j(x)    \, ,\\
   \label{Er-}
    \sr_{k,\eps}^-(x) = \vepsm(x) - \di\sum_{j=0}^{k}\eps^{\frac j2} \left(  \Vmb_{j}(x)  + \fv_j(x,\frac{d(x)}{\sqrt\eps})\right)    , \\
 \label{Eq+}
  \sq_{k,\eps}^+(x)=   \sp^+_{\eps}(x) - \di\sum_{j=0}^{k}\eps^{\frac j2}\sp^+_j(x)   \, ,\\
  \label{Eq-}
 \sq_{k,\eps}^-(x)=  \sp^-_{\eps}(x) - \di\sum_{j=0}^{k} \eps^{\frac j2} \left( \pmob_{j}(x)  +  \fp_j(x,\frac{d(x)}{\sqrt\eps})\right)\, .
\end{gather}

We have the following estimate : 
$$
 \eps \|\nabla{\sr}_{k,\eps}^{-} \|_{0,\Omega_{-}}^2
  +  \frac {\kappa}{4} \|{\sr}_{k,\eps}^{-} \|^2_{0,\Omega_{-}}  
  + \frac{ \mu C^2}{3} \|{\sr}_{k,\eps}^{+} \|_{1,\Omega_{+}}^2
\leqslant
C\eps^{ \frac{k-2}{2}}.$$

\end{thm}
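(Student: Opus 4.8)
The plan is to show that the remainder $(\sr_{k,\eps},\sq_{k,\eps})$ itself solves a Stokes--Brinkman transmission problem of the form \eqref{0SBBJ}, but with small data, and then to feed those data into the uniform energy estimate \eqref{E5} of Theorem \ref{thm32}. Concretely, I would insert the truncated sums \eqref{Er+}--\eqref{Eq-} into the equivalent system \eqref{Das1}--\eqref{Das9}, using the chain-rule formulas for $\Delta\big(\Vmt(x,\tfrac{d}{\sqrt\eps})\big)$, $\nabla\big(\fp(x,\tfrac{d}{\sqrt\eps})\big)$ and $\bd\big(\Vmt(x,\tfrac{d}{\sqrt\eps})\big)\cdot\nn$ recalled in Section \ref{AE}. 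Since the profiles were built in Proposition \ref{existence-profils} precisely so that the coefficient of each power $\eps^{j/2}$ with $j\le k$ vanishes (this is exactly the content of the cascade \eqref{fp0}--\eqref{svjn} and \eqref{divfvj}), all terms up to order $\eps^{k/2}$ cancel and only finitely many regular and boundary-layer terms of order $\eps^{(k+1)/2}$ and higher survive. This leaves interior residuals $\mathsf{G}^{\pm}_{\eps}$ in $\Omega_{\pm}$, interface residuals $\mathsf{L}_{\eps}$ and $\mathsf{H}_{\eps}$ on $\Sigma$ (in the stress-balance and in the tangential and normal jump conditions), and a divergence defect in $\Omega_{-}$, while $\sr_{k,\eps}^+$ remains exactly divergence free in $\Omega_{+}$ and vanishes on $\Gamma$.

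Next I would estimate each residual in the appropriate $\bL^2$ norm. The regular contributions are handled directly by the Sobolev bounds of Proposition \ref{existence-profils}: with $k=\bk-2$ the profiles $\Vmb_{k-1},\Vplus_{k},\dots$ carry enough derivatives for quantities such as $\Delta\Vmb_{k-1}$ and the surface traces of $\bd(\Vplus_j)\cdot\nn$ to be square integrable, so these terms are $O(\eps^{(k+1)/2})$. For the boundary-layer contributions, which all have the polynomial$\times e^{-\sqrt\kappa\sz}$ shape established in Proposition \ref{existence-profils}, the key gain is the thin-layer integration: after the substitution $\sz=d(x)/\sqrt\eps$ one has $\int_{\Omega_{-}}|P(\tfrac{d}{\sqrt\eps})|^2\,e^{-2\sqrt\kappa\,d/\sqrt\eps}\,\dr\xx=O(\sqrt\eps)$ for any polynomial $P$, so each boundary-layer profile gains an extra factor $\eps^{1/4}$ in its $\bL^2(\Omega_{-})$ norm. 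The combination of this gain with the $\eps^{-1/2}$ produced by every $\sz$-derivative of the fast variable and with the $\eps^{-1}$ weight carried by the normal-jump condition is what controls the final exponent; a crude count already shows that the worst residual contributes at order $\eps^{(k-2)/2}$ (and in fact better), which suffices for the statement.

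Because $\sr_{k,\eps}^-$ is not exactly divergence free (its divergence is a boundary-layer term of order $\eps^{k/2}$ coming from truncating \eqref{divfvj}), I would first absorb this defect by subtracting a Bogovskii-type lifting $\phi\in\bH^1(\Omega_{-})$ with $\nabla\cdot\phi=\nabla\cdot\sr_{k,\eps}^-$, whose $\bH^1(\Omega_{-})$-norm is bounded by $\|\nabla\cdot\sr_{k,\eps}^-\|_{0,\Omega_{-}}$ and is therefore negligible at the order considered. The corrected remainder then belongs to the space $V$ and satisfies \eqref{VP} with $\sg,\sh,\sl$ replaced by the residuals above, so \eqref{E5} bounds the whole energy by $c\big(\|\mathsf{G}_{\eps}\|_{0,\Omega}^2+\|\mathsf{H}_{\eps}\|_{0,\Sigma}^2+\|\mathsf{L}_{\eps}\|_{0,\Sigma}^2\big)$. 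Collecting the powers of $\eps$ from the previous step gives the bound $C\eps^{(k-2)/2}$ for $\eps\big\|\bd(\sr_{k,\eps}^-)\big\|_{0,\Omega_{-}}^2+\tfrac{\kappa}{4}\|\sr_{k,\eps}^-\|_{0,\Omega_{-}}^2+\tfrac{\mu C^2}{3}\|\sr_{k,\eps}^+\|_{1,\Omega_{+}}^2$; finally Korn's second inequality $\|\nabla\sr_{k,\eps}^-\|_{0,\Omega_{-}}^2\le C\big(\|\bd(\sr_{k,\eps}^-)\|_{0,\Omega_{-}}^2+\|\sr_{k,\eps}^-\|_{0,\Omega_{-}}^2\big)$ turns the $\bd$-term of \eqref{E5} into the $\nabla$-term of the statement.

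The hard part will be the systematic derivation and bookkeeping of the residuals and their exact orders in $\eps$. There are many terms, and one must keep the regularity budget exact — this is precisely why $\bk\ge 5$, i.e. $k=\bk-2\ge 3$, is imposed, since the residuals involve second derivatives of the order-$(k-1)$ and order-$(k-2)$ profiles as well as their traces on $\Sigma$. The genuinely delicate point is to organize the three competing scalings (the $\eps^{-1/2}$ from differentiating $\sz=d/\sqrt\eps$, the $\eps^{+1/4}$ from the boundary-layer integration, and the $\eps^{-1}$ weight on the normal jump in the interface conditions) so that every surviving term is seen to contribute at order $\eps^{(k-2)/2}$ or better, and to ensure that the divergence correction and the uncontrolled pressure remainder $\sq_{k,\eps}^-$ do not enter the energy identity in a way that degrades this order.
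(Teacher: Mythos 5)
Your proposal follows essentially the same route as the paper: derive the residual transmission system satisfied by $(\sr_{k,\eps},\sq_{k,\eps})$, correct the divergence defect in $\Omega_{-}$ by a lifting (the paper's $\psi_{k,\eps}$ plays exactly the role of your Bogovskii field $\phi$), estimate the residual data — with the $\eps^{-1}$-weighted normal-jump term indeed being the one that dictates the final exponent $\eps^{(k-2)/2}$ — and conclude via the uniform stability estimate \eqref{E5} of Theorem \ref{thm32}. The only (harmless) differences are bookkeeping refinements: you invoke the thin-layer $\eps^{1/4}$ gain where the paper simply bounds $\sz^l e^{-\sqrt\kappa\sz}$ by a constant, and you make explicit the Korn-type passage from $\bd$ to $\nabla$ that the paper leaves implicit.
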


\begin{proof}

By construction of the profiles, we derive  :
$$
 \left\{
   \begin{array}{lll}
- \eps \Delta \sr_{k,\eps}^{-} + \nabla \sq_{k,\eps}^-+\kappa\sr_{k,\eps}^{-}= \sg_ {k,\eps}  \quad&\mbox{in}\quad \Omega_{-}
\\[0.5ex]
 - \nabla \cdot \bsigma^+( \sr_{k,\eps}^+ , \sq_{k,\eps}^+)= 0
  \quad&\mbox{in}\quad \Omega_{+}
\\[0.5ex]
\nabla\cdot  \sr_{k,\eps}^- =\sf_{k, \eps} \quad &\mbox{in} \quad  \Omega_{-}
\\[0.5ex]
\nabla\cdot \sr_{k,\eps}^+=0 \quad&\mbox{in}\quad  \Omega_{+}
\\[0.5ex]
2\mu \bd(\sr_{k,\eps}^+)\cdot \nn -\sq_ {k,\eps}^+ \nn =2\eps \bd(\sr_{k,\eps}^-)\cdot \nn -\sq_ {k,\eps}^+\nn + \frac \beta 2 \left((\sr_{k,\eps}^++\sr_{k,\eps}^-
)\cdot \nn \right)\nn+\sl_ {k,\eps}
 \quad &\mbox{on} \quad \Sigma
  \\[0.5ex]
 \alpha (\sr_{k,\eps}^+ -\sr_{k,\eps}^-)_\tau= 2\mu \left(\bd( \sr_{k,\eps}^+)\cdot \nn \right)_\tau \quad &\mbox{on} \quad \Sigma
 \\[0.5ex]
 \frac{1}{\eps}( \sr_{k,\eps}^+ - \sr_{k,\eps}^-)\cdot \nn= 2\mu \left(\bd( \sr_{k,\eps}^+)\cdot \nn\right)\cdot \nn - \sq_{k,\eps}^+-\frac\beta 4 \left( \sr_{k,\eps}^+ + \sr_{k,\eps}^-\right)\cdot \nn + h_{k,\eps} \quad &\mbox{on} \quad \Sigma
\\[0.5ex]
\sr^+_ {k,\eps}=0
  \quad &\mbox{on}\quad \Gamma \ .
   \end{array}
    \right.
$$
with
$$
\begin{array}{l}
\dsp \sg_ {k,\eps}=\eps^{\frac{k+1}{2}}\left(\Delta \Vmt_{k-1} + 2(\nabla d\cdot \nabla)\dz\Vmt_{k} + \Delta d \dz \Vmt_{k} + \Delta \Vmt_{k-1}\right) +  \eps^{\frac{k +2}{2}} \left(
\Delta \Vmb_{k}+ \Delta \Vmt_{k}\right),\\
\\[1ex]
\dsp  \sf_{k, \eps}=    - \eps^{\frac{k }{2}} \nabla\cdot  \Vmt_{k}   ,
\\\\[1ex]
\sl_{k,\eps}= \eps^{\frac{k+1}{2}}\left(2 \bd(\Vmb_{k-1})\cdot \nn + 2\bd( \Vmt_{k-1})\cdot \nn - \dz\Vmt_{k}- (\dz \Vmt_k\cdot \nn)\nn\right)
 + \eps^{\frac{k +2}{2}} \left( \bd( \Vmb_{k})\cdot \nn  + \bd( \Vmt_{k})\cdot \nn \right),
\\\\[1ex]
h_{\eps,k}=  \eps^{\frac{k -1}{2}}\left( \bsigma^+( \Vplus_{k-1},\Pplus_{k-1}) \cdot \nn - \frac \beta 4 (\Vplus_{k-1} + \Vmb_{k-1} + \Vmt_{k-1})\right)\cdot \nn \\
\quad \quad + 
 \eps^{\frac{k }{2}}\left( \bsigma^+( \Vplus_{k},\Pplus_{k}) \cdot \nn - \frac \beta 4 (\Vplus_{k} + \Vmb_{k} + \Vmt_{k})\right)\cdot \nn.
\end{array}$$

Let us introduce $\psi_{k,\eps}:\Omega_-\longrightarrow \R^3$ such that
$$
\dsp \nabla \cdot \psi_{k,\eps}=\sf_{k,\eps} \quad  \mbox{in} \quad \Omega_{-}\ .
$$

From the expression of $\Vmt_k$ given by Proposition \ref{existence-profils}, since $\sz\mapsto \sz^l \exp (-\sqrt \kappa \, \sz)$ is bounded on $\R^+$, we obtain that
$$\Vert \sf_{k,\eps}\Vert_{\L^2(\Omega_-)}\leq C\eps ^{\frac{k}{2}} \max_l \{ \Vert \sw_k^l\Vert_{\H^1(\Omega_-)},  \Vert \nu_k^l\Vert_{\H^1(\Omega_-)}\}.$$

In addition, 
$$
\dsp \partial_i \sf_{k,\eps}= \dsp -\eps ^{\frac{k}{2}} (\nabla \cdot \partial_i \Vmt_k) (x,\frac{d(x)}{\sqrt\eps})-\eps ^{\frac{k-1}{2}} \partial_i d(x) (\nabla \cdot \dz \Vmt_k)(x,\frac{d(x)}{\sqrt\eps}).$$

Hence, there exists a constant $C>0$ such that 
$$\Vert \nabla \sf_{k,\eps}\Vert_{\L^2(\Omega_-)}\leq C\eps ^{\frac{k-1}{2}} \max_l \{ \Vert \sw_k^l\Vert_{\H^2(\Omega_-)},  \Vert \nu_k^l\Vert_{\H^2(\Omega_-)}\}.$$

Therefore we can assume that there exists $C>0$ such that for all $k$ and $\eps$,
\begin{equation}
\label{Epsikeps}
\Vert  \psi_{k,\eps} \Vert_{\bH^1(\Omega_-)}\leq C\eps ^{\frac{k}{2}}\max_l \{ \Vert \sw_k^l\Vert_{\H^1(\Omega_-)},  \Vert \nu_k^l\Vert_{\H^1(\Omega_-)}\}
\end{equation}
and 
$$\Vert  \psi_{k,\eps} \Vert_{\bH^2(\Omega_-)}\leq C\eps ^{\frac{k-1}{2}}\max_l \{ \Vert \sw_k^l\Vert_{\H^2(\Omega_-)},  \Vert \nu_k^l\Vert_{\H^2(\Omega_-)}\}.$$
Now we denote $\overline{\sr}_{k,\eps}^-=\sr_{k,\eps}^-- \psi_{k,\eps}$ and we obtain that $\overline{\sr}_{k,\eps}^-$, ${\sr}_{k,\eps}^+$ and $\sq_{k,\eps}^\pm$ satisfy:
\begin{equation}
\label{SBBJrem}
 \left\{
   \begin{array}{lll}
- \eps \Delta \overline{\sr}_{k,\eps}^{-} + \nabla \sq_{k,\eps}^-+\kappa \overline{\sr}_{k,\eps}^{-}= \overline \sg_ {k,\eps}   \quad&\mbox{in}\quad \Omega_{-}
\\[1ex]
 - \nabla \cdot \bsigma^+( \sr_{k,\eps}^+ , \sq_{k,\eps}^+)= 0
  \quad&\mbox{in}\quad \Omega_{+}
\\[1ex]
\nabla\cdot \overline{ \sr}_{k,\eps}^- =0  \quad &\mbox{in} \quad  \Omega_{-}
\\[1ex]
\nabla\cdot \sr_{k,\eps}^+=0 \quad&\mbox{in}\quad  \Omega_{+}
\\[1ex]
2\mu \bd(\sr_{k,\eps}^+)\cdot \nn -\sq_ {k,\eps}^+ \nn =2\eps \bd(\overline{\sr}_{k,\eps}^-)\cdot \nn -\sq_ {k,\eps}^+\nn + \frac \beta 2 \left((\sr_{k,\eps}^++\overline{\sr}_{k,\eps}^-
)\cdot \nn \right)\nn+\overline{\sl}_ {k,\eps}
 \\[1ex]
 \alpha (\sr_{k,\eps}^+ -\overline{\sr}_{k,\eps}^-)_\tau=2 \mu \left(\bd (\sr_{k,\eps}^+)\cdot \nn\right)_\tau
+\alpha \left( \psi_{k,\eps} \right)_\tau
  \quad &\mbox{on} \quad \Sigma
 \\[1ex]
\dsp  \frac{1}{\eps}( \sr_{k,\eps}^+ - \overline{\sr}_{k,\eps}^-)\cdot \nn= 2\mu  \left(\bd (\sr_{k,\eps}^+)\cdot \nn\right)\cdot \nn - \sq_{k,\eps}^+-\frac\beta 4 \left( \sr_{k,\eps}^+ + \overline{\sr}_{k,\eps}^-\right)\cdot \nn + \overline h_{k,\eps} 
  \quad &\mbox{on} \quad \Sigma
\\[1ex]
\sr^+_ {k,\eps}=0
  \quad &\mbox{on}\quad \Gamma \ ,
   \end{array}
    \right.
\end{equation}

where
$$
\begin{array}{l}
\dsp\overline \sg_ {k,\eps}=  \sg_ {k,\eps}  + \eps\Delta \psi_{k,\eps} -\kappa \psi_{k,\eps} ,
\\[1ex]
\dsp \overline \sl_{k,\eps}=   \sl_{k,\eps}     +\eps (2\bd(\psi_{k,\eps})\cdot \nn) +\frac \beta 2(\psi_{k,\eps}\cdot \nn)\nn,
\\[1ex]
\dsp \overline h_{k,\eps}  = h_{k,\eps}  +   \frac{1}{\eps} \psi_{k,\eps}\cdot \nn -\frac\beta 4  \psi_{k,\eps}\cdot \nn\ .
\end{array}$$

By assumption \eqref{regdata}, since $k=\bk-2$,  the terms $\sw_k^l$, $\sw_{k-1}^l$, $\nu_k^l$,  $\nu_{k-1}^l$, $\Vmb_k$, $\Vmb_{k-1}$ are bounded in $\H^2(\Omega_-)$, and $\Vplus_k$, $\Vplus_{k-1}$, $\Pplus_k$ and $\Pplus_{k-1}$ are bounded in $\H^2(\Omega_+)$.

Hence, we obtain the following estimates: there exists $C$ such that for all $\eps$,

\begin{equation}
\label{estifinales}
\begin{array}{l}
\Vert \overline{\sg}_{k,\eps}  \Vert_{\L^2(\Omega_-)}\leq C \eps^{\frac{k}{2}} \\
\\
\Vert \overline{\sl}_{k,\eps}  \Vert_{\L^2(\Sigma)}\leq C \eps^{\frac{k}{2}} \\
\\
\Vert \overline{h}_{k,\eps}  \Vert_{\L^2(\Sigma)}\leq C  \eps^{\frac{k-2}{2}} .
\end{array}
\end{equation}

Then, the remainder terms $(\overline \sr_{k,\eps}^-,\sr_{k,\eps}^+, \sq_{k,\eps}^-, \sq_{k,\eps}^+)$ satisfy Problem \eqref{0SBBJ} with the following data:
$$\sg^-=  \overline \sg_ {k,\eps}  ,\quad, \sg^+=0, \quad \sl= \overline \sl_{k,\eps}, \quad \sh=-\alpha \left( \psi_{k,\eps} \right)_\tau -\frac 1 2 \overline \sl_{k,\eps}-  \overline h_{k,\eps}\nn$$
and by \eqref{estifinales}, using Theorem \ref{thm32}, we obtain that

$$
 \eps \|\bd(\overline{\sr}_{k,\eps}^{-}) \|_{0,\Omega_{-}}^2
  +  \frac {\kappa}{4} \|\overline{\sr}_{k,\eps}^{-} \|^2_{0,\Omega_{-}}  
  + \frac{ \mu C^2}{3} \|{\sr}_{k,\eps}^{+} \|_{1,\Omega_{+}}^2
\leqslant
C\eps^{ \frac{k-2}{2}}.$$

Since $\sr_{k,\eps}^-=\overline{\sr}_{k,\eps}^- +\psi_{k,\eps}$,  this concludes the proof of Theorem \ref{DAS-WKB} by using estimate \eqref{Epsikeps}.
\end{proof}

Applying Theorem \ref{DAS-WKB} for $\bk = 5$, by straightforward estimates of the order 1, order 2 and order 3 terms of the asymptotic expansion, we conclude the proof of Theorem \ref{thm-conv}.

\vspace{5mm}

\bibliographystyle{plain}
\bibliography{biblio}

\end{document}